\newtheorem{theorem}{Theorem}[section]
\newtheorem{lemma}[theorem]{Lemma}
\newtheorem{proposition}{Proposition}[section]
\theoremstyle{definition}
\theoremstyle{remark}
\newtheorem{remark}[theorem]{Remark}
\numberwithin{equation}{section}
\begin{document}
		\title[Relativistic BGK model for reactive gas mixtures]{Relativistic BGK model for reactive gas mixtures}

\author[S.-Y. Cho]{Seung-Yeon Cho}
\address{Department of Mathematics
Gyeongsang National University, Jinju 52828, Republic of Korea}
\email{chosy89@gnu.ac.kr}
		
\author[B.-H. Hwang]{Byung-Hoon Hwang}
\address{Department of Mathematical education, Sangmyung University, 20, Hongjimun 2-gil, Jongno-gu, Seoul, Republic of Korea}
\email{bhh0116@smu.ac.kr}

\author[M-.S Lee ]{Myeong-Su Lee}
\address{Research Institute of Mathematics, Seoul National University, Seoul, Republic of Korea}
\email{msl3573@snu.ac.kr}

\author[S.-B. Yun]{Seok-Bae Yun}
\address{Department of Mathematics, Sungkyunkwan University, Suwon 440-746, Republic of Korea}

\email{sbyun01@skku.edu}
\keywords{Boltzmann equation, BGK model, reactive gases, special relativity}

\begin{abstract}

We propose a BGK-type kinetic model for relativistic reactive gas mixtures. This model serves as a computationally tractable yet physically consistent alternative to the corresponding Boltzmann equation. The relaxation operator is constructed to ensure that the model correctly satisfies the conservation laws and relaxes to the proper equilibrium: a J\"{u}ttner distribution characterized by a common temperature, velocity, and chemical potentials that obey the law of mass action. Furthermore, we prove that the model satisfies an H-theorem with the same entropy functional as the original Boltzmann equation. Finally, numerical simulations are presented, which confirm that the model preserves the conserved quantities and exhibits entropy decay towards the proper J\"{u}ttner equilibrium.
\end{abstract}
\maketitle
\tableofcontents

\section{Introduction}
The kinetic theory of gases provides a fundamental framework for understanding the behavior of rarefied gas systems from a mesoscopic perspective. Many early models assume a single species undergoing purely elastic collisions. In practical scenarios, however, most gases are multi-species, and particle collisions often involve inelastic chemical reactions that exchange number densities between species. To capture these effects, researchers have extended the classical Boltzmann equation and related kinetic models to chemically reacting mixtures, resulting in extensive theoretical and numerical studies \cite{eu1977kinetic,gorecki1991molecular,nowakowski1996solution,zamlutti1997effects,rossani1999note,groppi1999kinetic,lin2017multi,sarna2021moment}. Moreover, in extreme environments such as relativistic jets, gamma-ray bursts, and high-energy-density plasmas, relativistic corrections become indispensable. Consequently, kinetic models are reformulated to be consistent with relativity \cite{rieger2002particle,synge1957relativistic,scheck2002does,coughlin2015viscous,neitz2014electron,tzoufras2013multi}. In particular, relativistic Boltzmann equations have served as the fundamental tools for analyzing thermodynamic behavior and transport phenomena in high-speed gas systems \cite{hermens1972relativistic,cercignani2002relativistic,fotakis2022multicomponent,kremer2014theory,van2016cosmological,vereshchagin2017relativistic,zhang2020towards}.

In this paper, we consider a gas system composed of four different species, denoted by $G_i\ (i=1,\cdots,4)$, undergoing a reversible bi-molecular reaction represented symbolically by $G_1+G_2\leftrightarrow G_3+G_4$. This has been widely considered in both classical and relativistic frameworks \cite{groppi1999kinetic,rossani1999note,cercignani2002relativistic}.
In the framework of relativity, the dynamics of such reactive gas mixtures can be described by the following relativistic Boltzmann equation for reactive mixtures:
\begin{equation}\label{Boltzmann}
  \partial_t f_i 
  + \frac{cp_i}{p_i^0}\!\cdot\!\nabla_x f_i
  = C_i,\qquad (i=1,\cdots,4),
\end{equation}
where $f_i$ is the momentum distribution function of the $i$-th species and  $C_i$ is the collision operator representing elastic collisions and reactive interactions between the $i$-th species and all  particles.   A detailed description of its structure and physical properties is provided in Section~\ref{sec-preli}. We note that Boltzmann-type equations, including the aforementioned relativistic Boltzmann equation, provide accurate descriptions of non-equilibrium behaviors in rarefied gas systems. They naturally satisfy fundamental physical principles, such as conservation laws for particle number (or particle four-flows in relativistic cases), momentum, total energy (or the energy-momentum tensor), as well as the H-theorem \cite{cercignani2002relativistic, cercignani1988boltzmann}. However, despite their fundamental significance and physical consistency, Boltzmann-type equations are notoriously challenging to solve numerically, primarily due to the inherent complexity of the collision operators which involve high-dimensional integrals. This computational challenge is difficult even for the classical Boltzmann equation, and obviously, it's much more severe for reactive relativistic mixtures.

To overcome the difficulty, various alternatives have been developed to replace the complicated Boltzmann-type equations. The Bhatnagar-Gross-Krook (BGK) relaxation-time approximation is one of the most well-known and representative alternatives. As an approximation of the classical Boltzmann equation, the first BGK model was proposed in 1954 under idealized physical assumptions \cite{bhatnagar1954model,welander1954temperature}. Subsequently, like many other kinetic models, the BGK approach has been progressively generalized and extended to more complex physical contexts, providing simplified counterparts to generalized Boltzmann-type equations involving multi-species particles, chemical reactions, and relativistic effects. In fact, we can find numerous studies conducted separately for classical reactive gas mixtures and relativistic gases (see Subsection \ref{subsec1-1} for detailed lists). However, to the best of our knowledge, the BGK-type models have yet to be extended to the system of relativistic reactive gas mixtures.

In this regard, this work aims to propose  a BGK-type model for relativistic reactive gas mixtures, by replacing the  Boltzmann collision operator $C_i$ of \eqref{Boltzmann} with a BGK-type relaxation operator $\widetilde{C}_i$: 
\begin{align*}
	\widetilde{C}_i=\frac{cm_i}{\tau_i p_i^0}(\mathcal{J}_i-f_i), \qquad (i=1,\cdots,4),
\end{align*}
where $\tau_i$ represents the characteristic relaxation time and the attractor $\mathcal{J}_i$ is given by the J\"{u}ttner distribution
\begin{equation}\label{juttner}
\mathcal{J}_i=\frac{g_{s_{i}}}{h^3}\exp\left({\widetilde{\beta}\widetilde{\mu}_i-\widetilde{\beta}\widetilde{U}^\mu p_{i \mu }}\right)\qquad \text{with}\quad \widetilde{\beta}:=1/k\widetilde{T}
\end{equation}
with the speed of light $c$, Planck constant $h$, Boltzmann constant $k$, and degeneracy factor $g_{s_i}$ of the $i$-th species. The auxiliary chemical potentials $\widetilde{\mu}_i$, four-velocity $\widetilde{U}^\mu$, and temperature $\widetilde{T}$ are functions of time $t$ and position $x$, which are determined in a way that the resulting model satisfies fundamental physical properties. 

This approach builds upon the existing work for classical reactive gas mixtures \cite{groppi2009kinetic} and relativistic inert gas mixtures \cite{hwang2024relativistic}, but the determination of auxiliary parameters involves non-trivial steps that go significantly beyond merely combining existing results. In the case of a relativistic inert mixture, as studied in \cite{hwang2024relativistic}, the auxiliary parameters are determined by imposing two distinct sets of constraints. First, to ensure the conservation of the particle four-flow for each species, the following species-wise cancellation property must hold:
\begin{align}
\begin{split}\label{cancell}
&\int_{\mathbb{R}^{3}}\widetilde C_{i}\ dp_{i}
= 0,\quad i=1,\cdots,4.
\end{split}
\end{align}
Second, the conservation of the total energy-momentum tensor requires that:
\begin{align}
\begin{split}\label{cancell2}
& \sum_{i=1}^4\int_{\mathbb{R}^3}p_i^\mu\widetilde C_i\ dp_i
= 0.
\end{split}
\end{align}
Condition \eqref{cancell} provides four constraints (one for each species), while \eqref{cancell2} provides another four (one for each component of the four-vector). Together, they yield a total of \emph{eight} independent constraints for determining the auxiliary parameters. By contrast, when the reversible reaction $G_{1}+G_{2}\leftrightarrow G_{3}+G_{4}$ is allowed, the conservation of the partial particle four-flow of each species no longer holds, while the conservation of the total energy-momentum tensor still holds. Instead, the relations~\eqref{cancell} is replaced by the following pair-wise cancellations:
\begin{equation}\label{paircancell}
\int_{\mathbb R^{3}}\widetilde C_{1}\,dp_{1}+
\int_{\mathbb R^{3}}\widetilde C_{3}\,dp_{3}=0,\qquad
\int_{\mathbb R^{3}}\widetilde C_{1}\,dp_{1}+
\int_{\mathbb R^{3}}\widetilde C_{4}\,dp_{4}=0,\qquad
\int_{\mathbb R^{3}}\widetilde C_{2}\,dp_{2}+
\int_{\mathbb R^{3}}\widetilde C_{3}\,dp_{3}=0.
\end{equation}
Thus, together with \eqref{cancell2}, we obtain only \emph{seven} independent constraints from the conservation laws in the case of the relativistic  reactive gas mixtures. Since the number of unknown auxiliary parameters is \emph{eight}, the system is now under-determined. To close the system, we supplement \eqref{cancell2} and \eqref{paircancell} with the mass-action law (often called the law of chemical equilibrium):
\begin{align*}\label{action}
\widetilde\mu_{1}+\widetilde\mu_{2}=\widetilde\mu_{3}+\widetilde\mu_{4},    
\end{align*} 
ensuring that the number of equations equals the number of unknowns. This new coupled system of constraints, however, causes a significant interplay among the auxiliary parameters of different species, making their determination far more intricate than in the relativistic inert case. While a similar coupled system of constraints for auxiliary parameters is, in fact, also seen in classical reactive mixtures \cite{groppi2009kinetic}, the relativistic effects cause difficulties that were not observed in previous work. Specifically, the whole determination problem can be reduced to solving the following system of relations:
\begin{equation}\label{system}
\begin{dcases}
\begin{aligned}
    &\bullet\quad \frac{g_{s_i}}{h^3}e^{\widetilde{\beta} \widetilde{\mu}_i}\int_{\mathbb{R}^3}e^{-c\widetilde{\beta} p_i^0}\,\frac{dp_i}{p_i^0}=\int_{\mathbb{R}^3}f_i\frac{dp_i}{p^0_i}+\frac{\nu_1}{\nu_2}\left(\frac{g_{s_1}}{h^3}e^{\widetilde{\beta} \widetilde{\mu}_1}\int_{\mathbb{R}^3}e^{-c\widetilde{\beta} p_1^0}\,\frac{dp_1}{p_1^0}-\int_{\mathbb{R}^3}f_1\frac{dp_1}{p_1^0}\right)\qquad \text{for}\quad i=2,3,4,
    \end{aligned}
    \\[2ex]
\begin{aligned}
&\bullet\quad\nu_1\left(\frac{g_{s_i}
}{h^3}e^{\widetilde{\beta}\widetilde{\mu}_1}\int_{\mathbb{R}^3}e^{-c\widetilde{\beta}p_1^0}\frac{dp_1}{p_1^0}-\int_{
\mathbb{R}^3}f_1\frac{dp_1}{p_1^0}\right)\\
&\qquad\qquad\times\left(\frac{\int_{\mathbb{R}^3}e^{-c\widetilde{\beta}p_1^0}dp_1}{\int_{\mathbb{R}^3}e^{-c\widetilde{\beta}p_1^0}\frac{dp_1}{p_1^0}}+\frac{\int_{\mathbb{R}^3}e^{-c\widetilde{\beta}p_2^0}dp_2}{\int_{\mathbb{R}^3}e^{-c\widetilde{\beta}p_2^0}\frac{dp_2}{p_2^0}}-\frac{\int_{\mathbb{R}^3}e^{-c\widetilde{\beta}p_3^0}dp_3}{\int_{\mathbb{R}^3}e^{-c\widetilde{\beta}p_3^0}\frac{dp_3}{p_3^0}}-\frac{\int_{\mathbb{R}^3}e^{-c\widetilde{\beta}p_4^0}dp_4}{\int_{\mathbb{R}^3}e^{-c\widetilde{\beta}p_4^0}\frac{dp_4}{p_4^0}}\right)\\
&\qquad=\frac{1}{c}\left[\left(\sum_{i=1}^{4}\nu_i\int_{\mathbb{R}^3}p_i^\mu f_idp_i\right)\left(\sum_{i=1}^{4}\nu_i\int_{\mathbb{R}^3}p_{i\mu} f_idp_i\right)\right]-\sum_{i=1}^4\nu_i\frac{\int_{\mathbb{R}^3}e^{-c\widetilde{\beta}p_i^0}dp_i}{\int_{\mathbb{R}^3}e^{-c\widetilde{\beta}p_i^0}\frac{dp_i}{p_i^0}}\int_{\mathbb{R}^3}f_i\frac{dp_i}{p_i^0}
\end{aligned}
\\[2ex]
\begin{aligned}
\bullet\quad\widetilde{\mu}_1+\widetilde{\mu}_2=\widetilde{\mu}_3+\widetilde{\mu}_4
\end{aligned}
\end{dcases}
\end{equation}
Establishing the solvability of this highly nonlinear transcendental system is a non-trivial task. Indeed, the proof requires a careful analysis of the asymptotic behavior of nonlinear functions associated with the modified Bessel function of the second kind.
\\

\subsection{Related works}\label{subsec1-1}

For the case of classical inert  gas mixtures, numerous attempts made over several decades have resulted in models  \cite{gross1956model,hamel1965kinetic,garzo1989kinetic} in 1950s--80s. However, these models lacked some essential physical properties. The first consistent BGK-type model for inert gas mixtures was developed by Andries et al. \cite{andries2002consistent} in 2002. Following this pioneering work, several additional consistent BGK-type models have subsequently been proposed \cite{haack2017conservative,bobylev2018general,klingenberg2018consistent}. Building upon these inert mixture models, extensions to chemically reactive gas mixtures have also been suggested \cite{groppi2004bhatnagar,kremer2006relaxation,groppi2009kinetic}. Groppi et al. \cite{groppi2004bhatnagar} extended the framework of Andries et al. \cite{andries2002consistent} to reactive mixtures. However, their model has critical shortcomings, including the absence of a rigorous proof of the H-theorem and potential issues regarding the positivity of auxiliary temperatures. Indeed, Kim et al. \cite{kim2021positivity} show that the auxiliary temperature could become negative for certain velocity distributions. Similarly, another model proposed by Kremer and Marques \cite{kremer2006relaxation} also fails to satisfy the H-theorem. These significant issues were first resolved by Groppi et al. \cite{groppi2009kinetic}, who adopted a simplifying assumption that sets all auxiliary mean velocities and temperatures equal across species, determining their common values from constraints derived from conservation laws and mass action laws. This approach has recently been extended further to reactive mixtures involving both monatomic and polyatomic gases \cite{bisi2018bgk, bisi2021kinetic}. For mathematical and numerical studies related to BGK-type models for classical gas mixtures, we refer to \cite{klingenberg2018existence,kim2021stationary,bae2023bgk,brull2024stationary} (mathematical results) and \cite{groppi2016semi,cho2020conservative,crestetto2020kinetic,tantos2021heat} (numerical methods).

In the relativistic framework, BGK-type models were proposed by Marle \cite{marle1965modele, marle1969etablissement} and Anderson and Witting \cite{anderson1974relativistic} for single monatomic gases. Marle's model adopts the Eckart frame \cite{eckart1940thermodynamics}, whereas the Anderson–Witting model employs the Landau–Lifshitz frame \cite{landau1987fluid}. Later, a relativistic BGK-type model for polyatomic gases was developed in \cite{pennisi2018new}, utilizing a generalized J\"{u}ttner distribution to polyatomic particles which is proposed in \cite{pennisi2017relativistic}. For inert multi-species relativistic gases, a consistent generalization of the Marle-type approximation was recently introduced by Hwang et al. \cite{hwang2024relativistic}. We mention that although relativistic BGK-type models for gas mixtures have been extensively utilized to study transport coefficients and analyze various relativistic flow phenomena \cite{kremer2003relativistic, kremer2012relativistic, kremer2014diffusion}, a rigorous discussion on how the auxiliary parameters characterizing relaxation-time operators should be determined was first addressed in \cite{hwang2024relativistic}. This determination step is crucial, as it provides the foundation for any rigorous mathematical analysis or numerical simulation of the kinetic system. Indeed, once the auxiliary parameters are properly specified, the aforementioned relativistic BGK-type models have been the subject of extensive mathematical studies \cite{bellouquid2012relativistic,calvo2020global,hwang2022global,hwang2021relativistic,hwang2021stationary,hwang2019anderson,hwang2024stationary}.

\subsection{Organization} The remainder of this paper is organized as follows. In Section~\ref{sec-preli}, we briefly review some fundamental notations and preliminaries of the relativistic kinetic theory. In Section~\ref{sec-model}, we propose our BGK-type relaxation operator $\widetilde C_i$ for relativistic reactive gas mixtures. We  determine the auxiliary parameters in the relaxation operator, by solving the nonlinear system of equations \eqref{system}. We also establish the H-theorem for the resulting model. Section~\ref{sec-numeric} presents numerical simulations in spatially homogeneous setting, which illustrates conservation, entropy dissipation, and relaxation to the correct equilibrium state.

\section{Preliminary}\label{sec-preli}
In this section, we introduce some fundamental notations and preliminaries required for the subsequent development of our relativistic BGK-type model for reactive gas mixtures.

Let $m_i$ denote the rest mass of a particle of $i$-th species. Non-relativistic kinetic theory typically considers only cases where the masses of gas particles are conserved, i.e., $m_1+m_2=m_3+m_4$ \cite{groppi2009kinetic,groppi2004bhatnagar}. However, inelastic processes of relativistic gases considered in this paper allow for the possibility that the masses of particles are not conserved. In other words, the mass defect $\Delta m:=m_1+m_2-m_3-m_4$ could be non-zero. Such a non-negligible mass defect implies the presence of binding energy, as described by Einstein's mass-energy equivalence.

The state of each species $i$ is described by its momentum distribution function $f_i(x^{\mu}, p_i^{\mu})$, which represents the number density of $i$ species at the phase point $(x^{\mu}, p_i^{\mu})$, where we used the following  the space-time coordinates $x^{\mu}$ and the four-momentums $p_i^{\mu}$:
$$
x^{\mu} =(ct,x)\in \mathbb{R}_+\times \Omega_x,\qquad p_i^{\mu}=(\sqrt{(cm_i)^2+|p_i|^2},p_i)\in\mathbb{R}_+\times\mathbb{R}^3.
$$  
For each momentum distribution $f_i$, we define macroscopic quantities as follows. First, the partial particle four-flow $N_i^\mu$ and the partial energy-momentum tensor $T_i^{\mu\nu}$ are given by
$$
N_i^\mu=c\int_{\mathbb{R}^3}p^\mu_if_i \,\frac{dp_i}{p_i^0},\qquad T_i^{\mu\nu}=c\int_{\mathbb{R}^3}p_i^\mu p_i^\nu f_i \frac{dp_i}{p_i^0}.
$$
Then, the particle four-flow of gas mixtures $N^\mu$ and the energy-momentum tensor of gas mixtures $T^{\mu\nu}$ are obtained by
$$
N^\mu=\sum_{i=1}^{4}N_i^\mu, \qquad T^{\mu\nu}=\sum_{i=1}^{4}T_i^{\mu\nu}.
$$
According to the Eckart frame \cite{eckart1940thermodynamics}, $N_i^\mu$ is decomposed into
\begin{equation*} 
N_i^\mu= n_iU_i^\mu
\end{equation*}
where $n_i$ is the (macroscopic) number density, and $U_i^\mu$ is the Eckart four-velocity
\begin{align*}\begin{split}
n_i&=\frac{1}{c}\int_{\mathbb{R}^3}p^\mu_i U_{i \mu }f_i \,\frac{dp_i}{p_i^0}=\Biggl\{\biggl(\int_{\mathbb{R}^3}f_i \,dp_i\biggl)^2-\sum_{j=1}^3\biggl(\int_{\mathbb{R}^3}p^j_if_i \,\frac{dp_i}{p_i^0}\biggl)^2\Biggl\}^{\frac{1}{2}},\cr
U_i^\mu&=\frac{c}{n_i}\int_{\mathbb{R}^3}p^\mu_if_i \,\frac{dp_i}{p_i^0}.
\end{split}\end{align*}
Here, $U^\mu_i$ has a constant length in the following sense
$$
U^\mu_i U_{i \mu }=c^2,\qquad \text{and hence}\qquad U_i^\mu=\left(\sqrt{c^2+|U_i|^2},U_i\right),
$$
where we follow the Einstein summation convention:
$$
a^\mu b_\mu = a^0 b^0-\sum_{j=1}^3a^j  b^j.
$$
The entropy four-flow of gas mixtures is defined as
\begin{equation*}\label{entropy}
S^\mu= -kc\sum_{i=1}^4\int_{\mathbb{R}^3}p_i^\mu  f_i \ln\left( \frac{f_i h^3}{g_{s_i}}\right) \,\frac{dp_i}{p_i^0}.
\end{equation*}

\subsection{Boltzmann collision operator for relativistic reactive gas mixtures}\label{subsec2-1}
This subsection is devoted to briefly reviewing the precise description and crucial properties of the Boltzmann collision operator for relativistic reactive gas mixtures. 

The Boltzmann collision operator $C_i$ is decomposed into two distinct parts, elastic inert collision and inelastic reactive collision, given by $ C_i:=\sum_{j=1}^{4}Q_{ij}+J_i$. Here, the inert collision operators $Q_{ij}$ and the reactive collision operators $J_i$ are explicitly defined by:
\begin{align*}
	Q_{ij}&=\frac{1}{p^0_i}\sum_{j=1}^{4}\int(f_i(p_i'^\mu)f_j(q_j'^\mu)-f_i(p_i^\mu)f_j(q_j^\mu))F_{ij}\sigma_{ij}d\Omega\frac{dq_j}{q^0_j},\\
	J_1&=\frac{1}{p^0_1}\int(f_3(p_3'^\mu)f_4(q_4'^\mu)-f_1(p_1^\mu)f_2(q_2^\mu))F_{12}\sigma_{12}^{34} d\Omega\frac{dq_2}{q^0_2},\\
	J_2&=\frac{1}{p^0_2}\int(f_4(p_4'^\mu)f_3(q_3'^\mu)-f_2(p_2^\mu)f_1(q_1^\mu))F_{21}\sigma_{21}^{43} d\Omega\frac{dq_1}{q^0_1},\\
	J_3&=\frac{1}{p^0_3}\int(f_1(p_1'^\mu)f_2(q_2'^\mu)-f_3(p_3^\mu)f_4(q_4^\mu))F_{34}\sigma_{34}^{12} d\Omega\frac{dq_4}{q^0_4},\\
	J_4&=\frac{1}{p^0_4}\int(f_2(p_2'^\mu)f_1(q_1'^\mu)-f_4(p_4^\mu)f_3(q_3^\mu))F_{43}\sigma_{43}^{21} d\Omega\frac{dq_3}{q^0_3},
\end{align*}
where $\sigma_{ij}$ is the collision kernel and $F_{ij}$ is the invariant flux. In the following, we present fundamental properties of the Boltzmann collision operator. Crucially, the relaxation operator to be constructed in the next section is designed to satisfy these same properties.
\begin{enumerate}
    \item {\bf Conservation laws} The Boltzmann collision operators, $C_i$, satisfies the following cancellation properties: 
\begin{align*}\begin{split}
	\int_{\mathbb{R}^3} C_idp_i + \int_{\mathbb{R}^3}C_jdp_j&=0,\qquad (i,j)=(1,3),(1,4),(2,4),\\
	\sum_{i=1}^4\int_{\mathbb{R}^3}p_i^\mu C_idp_i&=0,
\end{split}\end{align*} 
which leads to the balance equations for the particle four-flow and the energy-momentum tensor:
\begin{align*}
	&\partial_\mu N_i^\mu+\partial_\mu N_j^\mu=0,\qquad (i,j)=(1,3),\ (1,4),\ (2,4),\\
	&\sum_{i=1}^4\partial_\nu T_i^{\mu\nu}=0.
\end{align*}
\item {\bf H-theorem} The following inequality holds:
\begin{align}\label{htheorem}
	\sum_{i=1}^4\int_{\mathbb{R}^3}C_i\ln\left(\frac{f_ih^3}{g_{s_i}}\right)dp_i&\leq0,
\end{align}
which implies the entropy inequality:
\begin{align*}
		\partial_\mu S^\mu\geq 0.
\end{align*}
\item {\bf Equilibria} At equilibrium meaning that the equality holds in \eqref{htheorem}, the momentum distributions $f_i$ are J\"utnner distributions with a common temperature $1/k\beta_E$ and four-velocity $U_E^\mu$ as below
\begin{align*}
	f_i=\frac{g_{s_i}}{h^3}\exp\left(\beta_E\mu_{E_i}-\beta_E U_E^\mu p_{i\mu}\right),
\end{align*}
with the relation for the chemical potentials:
\begin{align*}
	\mu_{E_1}+\mu_{E_2}=\mu_{E_3}+\mu_{E_4}.
\end{align*}
\end{enumerate}
For more details on relativistic kinetic theory for reactive gas mixtures, we refer the reader to Chapter 7 in \cite{cercignani2002relativistic}.

\section{Relaxation Operator}\label{sec-model}
In this section, we propose a single relaxation operator to replace the Boltzmann collision operator $C_i$, based on existing relativistic BGK-type models introduced in \cite{hwang2024relativistic, marle1965modele, marle1969etablissement}. Specifically, we adopt a Marle-type relaxation operator of the form:
\begin{align*}
\widetilde{C}_i:=\frac{cm_i}{\tau_i p_i^0}\left(\mathcal{J}_i - f_i\right), \qquad i=1,\cdots,4,
\end{align*}
where $\tau_i$ denotes the characteristic relaxation time, typically of the order of the mean free time between collisions.

To ensure that the equilibrium state of our relaxation operator coincides with that of the relativistic Boltzmann equation for gas mixtures discussed in the previous section, we choose the attractor $\mathcal{J}_i$ as the J\"{u}ttner-type distribution:
\begin{align}\label{Juttner}
\mathcal{J}_i(\widetilde{\mu}_i,\widetilde{U},\widetilde{\beta}; p_i)=\frac{g_{s_{i}}}{h^3}\exp\left(\frac{\widetilde{\mu}_i}{k\widetilde{T}}-\frac{\widetilde{U}^\mu p_{i \mu }}{k\widetilde{T}}\right),
\end{align}
where the auxiliary parameters $\widetilde\mu_i$ $(i=1,\cdots,4)$, $\widetilde{U}^\mu$, and $\widetilde{T}$ represent the chemical potentials,  common four-velocity, and  common temperature  respectively. For notational convenience, we often use the notation $
\widetilde{\beta}:=\frac{1}{k\widetilde{T}}.
$ Here, the common four-velocity $\widetilde{U}^\mu$ has a constant length in the following sense:
\[
\widetilde{U}^\mu \widetilde{U}_\mu=c^2,
\qquad \text{and hence} \qquad \widetilde{U}^0=\sqrt{c^2+ |\widetilde{U} |^2}.
\]
In addition, the chemical potentials $\widetilde{\mu}$ must be chosen to satisfy the mass action law:
\begin{align}\label{eq2-2}
\widetilde\mu_1+\widetilde\mu_2=\widetilde\mu_3+\widetilde\mu_4.
\end{align}
Such construction of $\mathcal{J}_i$ in \eqref{Juttner} naturally drives the momentum distributions $f_i$ toward J\"{u}ttner distributions characterized by a common four-velocity, common temperature, and chemical potentials satisfying the mass action law. 

To further ensure that the resulting relaxation operator shares the same conservation properties as the Boltzmann collision operator discussed in Subsection \ref{subsec2-1}, we determine the auxiliary parameters by imposing the following constraints:
\begin{align}\label{eq2-1}
\begin{split}
\int_{\mathbb{R}^3} \widetilde{C}_i\,dp_i + \int_{\mathbb{R}^3}\tilde{C}_j\,dp_j&=0,\qquad (i,j)=(1,3),(1,4),(2,4),\\[3mm]
\sum_{i=1}^{4}\int_{\mathbb{R}^3} p_i^\mu \widetilde{C}_i\,dp_i&=0.
\end{split}
\end{align}
In the remainder of this section, we rigorously establish the existence of auxiliary parameters satisfying the above constraints \eqref{eq2-2} and \eqref{eq2-1}, ensuring the physical consistency of the proposed relaxation operator $\widetilde{C}_i$ with the Boltzmann operator $C_i$. 

Before proceeding further, we define some notations for clarity of presentation:
\begin{align}\label{notations}\begin{split}
&M_i(\widetilde{\beta})=\int_{\mathbb{R}^3}e^{-c\widetilde{\beta} p_i^0}\,dp_i,\quad \widetilde{M}_i(\widetilde{\beta})=\int_{\mathbb{R}^3}e^{-c\widetilde{\beta} p_i^0}\,\frac{dp_i}{p_i^0}\quad A_i(\widetilde{\beta},\widetilde{\mu}_i)=\frac{g_{s_i}}{h^3}e^{\widetilde{\beta} \widetilde{\mu}_i},\cr 
&\xi(\widetilde{\beta})=\frac{M_1(\widetilde{\beta})}{\widetilde{M}_1(\widetilde{\beta})}+\frac{M_2(\widetilde{\beta})}{\widetilde{M}_2(\widetilde{\beta})}-\frac{M_3(\widetilde{\beta})}{\widetilde{M}_3(\widetilde{\beta})}-\frac{M_4(\widetilde{\beta})}{\widetilde{M}_4(\widetilde{\beta})},\qquad \nu_i=\frac{cm_i}{\tau_i},\cr
&\Sigma(\widetilde{\beta})=\sum_{i=1}^4\frac{M_i(\widetilde{\beta})}{\widetilde{M}_i(\widetilde{\beta})}\nu_i\int_{\mathbb{R}^3}f_i\,\frac{dp_i}{p_i^0},\qquad Z=\frac{1}{c}\left[\left(\sum_{i=1}^4\nu_in_iU_i^\mu\right)\left(\sum_{i=1}^4\nu_in_iU_{i\mu}\right)\right]^{\frac 12}.
\end{split}\end{align}
Keeping this notational setup in mind, we now reformulate the constraints in \eqref{eq2-1} into a more tractable set of algebraic relations among the auxiliary parameters.
\begin{proposition}\label{JF1}
Let $f_i(p^\mu)\geq0$ $(i=1,\cdots,4)$ be integrable and not identically zero
in the almost-everywhere sense so that $N^\mu_i$ and $T^{\mu\nu}_i$ exist. Then, the system of equations \eqref{eq2-1} leads to
\begin{enumerate}
	\item The auxiliary four-velocity $\widetilde{U}^\mu$ is given by
		\begin{align*}
	\widetilde{U}^\mu=\frac{1}{Z}\sum_{i=1}^4\nu_in_iU_i^\mu.
	\end{align*}
	\item Once $\widetilde{\beta}$ and $\widetilde{\mu}_1$ are given, the chemical potentials $\widetilde\mu_i$ ($i=2,3,4$) are given by the following relations:
	\begin{align}
		\begin{split}\label{ma}
			\widetilde{M}_2A_2&=\int_{\mathbb{R}^3}f_2\frac{dp_2}{p^0_2}+\frac{\nu_1}{\nu_2}\left(\widetilde{M}_1A_1-\int_{\mathbb{R}^3}f_1\frac{dp_1}{p_1^0}\right),\\
			\widetilde{M}_3A_3&=\int_{\mathbb{R}^3}f_3\frac{dp_3}{p^0_3}-\frac{\nu_1}{\nu_3}\left(\widetilde{M}_1A_1-\int_{\mathbb{R}^3}f_1\frac{dp_1}{p_1^0}\right),\\
			\widetilde{M}_4A_4&=\int_{\mathbb{R}^3}f_4\frac{dp_4}{p^0_4}-\frac{\nu_1}{\nu_4}\left(\widetilde{M}_1A_1-\int_{\mathbb{R}^3}f_1\frac{dp_1}{p_1^0}\right).
		\end{split}
	\end{align}
	\item The chemical potential $\widetilde\mu_1$ and the auxiliary parameter $\widetilde{\beta}$ satisfy the relation:
	\begin{align}\label{ma2}
		\nu_1\left(\widetilde{M}_1A_1-\int_{\mathbb{R}^3}f_1\frac{dp_1}{p_1^0}\right)\xi=Z-\Sigma.
	\end{align}
\end{enumerate}

\end{proposition}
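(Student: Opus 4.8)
The plan is to turn each integral constraint in \eqref{eq2-1} into an algebraic relation among the auxiliary parameters by evaluating the relevant moments of the J\"uttner attractor $\mathcal{J}_i=A_i\exp(-\widetilde\beta\,\widetilde U^\mu p_{i\mu})$, with $A_i=A_i(\widetilde\beta,\widetilde\mu_i)$ as in \eqref{notations}. Since $\widetilde C_i=(\nu_i/p_i^0)(\mathcal{J}_i-f_i)$, the scalar constraints require $\int_{\mathbb{R}^3}\mathcal{J}_i\,dp_i/p_i^0$. The key observation is that $dp_i/p_i^0$ is a Lorentz scalar and $\widetilde U^\mu\widetilde U_\mu=c^2$, so one may pass to the rest frame of $\widetilde U$, in which $\widetilde U^\mu p_{i\mu}=c\,p_i^0$, to obtain $\int_{\mathbb{R}^3}\mathcal{J}_i\,dp_i/p_i^0=\widetilde M_i A_i$ independently of the frame. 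Hence $\int_{\mathbb{R}^3}\widetilde C_i\,dp_i=\nu_i D_i$ with $D_i:=\widetilde M_i A_i-\int_{\mathbb{R}^3}f_i\,dp_i/p_i^0$. Substituting this into the three pair-wise cancellations $(i,j)=(1,3),(1,4),(2,4)$ gives $\nu_3 D_3=\nu_4 D_4=-\nu_1 D_1$ and $\nu_2 D_2=\nu_1 D_1$; solving for $\widetilde M_i A_i$ $(i=2,3,4)$ yields exactly the relations \eqref{ma}, which is part~(2).

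For the energy–momentum constraint I would compute the vector moment $\int_{\mathbb{R}^3}p_i^\mu\mathcal{J}_i\,dp_i/p_i^0$. Working again in the rest frame of $\widetilde U$, the spatial components vanish by oddness of $p_i^j e^{-c\widetilde\beta p_i^0}$ and the time component equals $A_i M_i$; since this moment is a four-vector built solely from $\widetilde U^\mu$, covariance forces $\int_{\mathbb{R}^3}p_i^\mu\mathcal{J}_i\,dp_i/p_i^0=c^{-1}M_i A_i\,\widetilde U^\mu$. Combining with $\int_{\mathbb{R}^3}p_i^\mu f_i\,dp_i/p_i^0=c^{-1}n_i U_i^\mu$ and summing, the constraint $\sum_i\int p_i^\mu\widetilde C_i\,dp_i=0$ becomes $\big(\sum_{i=1}^4\nu_i M_i A_i\big)\widetilde U^\mu=\sum_{i=1}^4\nu_i n_i U_i^\mu$. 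This is the step I expect to be the main obstacle: to solve for $\widetilde U^\mu$ one must know that $\sum_i\nu_i n_i U_i^\mu$ is future-directed timelike, which follows from convexity of the forward cone since $\nu_i n_i>0$ and each $U_i^\mu U_{i\mu}=c^2$, so that its Minkowski length $cZ$ is real and positive. Imposing $\widetilde U^\mu\widetilde U_\mu=c^2$ then forces the scalar identity $\sum_{i=1}^4\nu_i M_i A_i=Z$ together with $\widetilde U^\mu=Z^{-1}\sum_{i=1}^4\nu_i n_i U_i^\mu$, which is part~(1).

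Finally, part~(3) follows by feeding \eqref{ma} into the scalar identity $\sum_{i=1}^4\nu_i M_i A_i=Z$ just obtained. Writing $M_i A_i=(M_i/\widetilde M_i)\,\widetilde M_i A_i$ and expressing each $\widetilde M_i A_i$ through part~(2) in terms of $\int f_i\,dp_i/p_i^0$ and $D_1=\widetilde M_1 A_1-\int f_1\,dp_1/p_1^0$, the terms carrying $\int f_i\,dp_i/p_i^0$ assemble into $\Sigma$, while the terms carrying $D_1$ assemble, via the sign pattern $+,+,-,-$ in \eqref{ma}, into $\nu_1 D_1\,\xi$ with $\xi$ as in \eqref{notations}. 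This gives $\nu_1 D_1\,\xi=Z-\Sigma$, namely \eqref{ma2}, completing the reformulation. Apart from the two covariant moment evaluations flagged above, the remaining work is routine bookkeeping.
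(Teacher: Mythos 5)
Your proposal is correct and follows essentially the same route as the paper: evaluating the scalar and vector moments of $\mathcal{J}_i$ by passing to the rest frame of $\widetilde{U}^\mu$ (giving $\widetilde{M}_iA_i$ and $c^{-1}M_iA_i\widetilde{U}^\mu$), extracting \eqref{ma} from the pair-wise cancellations, fixing $\widetilde{U}^\mu$ and the identity $\sum_i\nu_iM_iA_i=Z$ from the normalization $\widetilde{U}^\mu\widetilde{U}_\mu=c^2$, and then substituting \eqref{ma} into that identity to obtain \eqref{ma2}. The only addition is your explicit remark that $\sum_i\nu_in_iU_i^\mu$ is future-directed timelike so that $Z>0$, a point the paper leaves implicit.
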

	\begin{proof}
		
$\bullet$ Proof of (1):	The second line of \eqref{eq2-1} gives 
\begin{align}\label{1}
\sum_{i=1}^4\nu_iA_i(\widetilde{\beta})\int_{\mathbb{R}^3}p_i^\mu e^{-\widetilde{\beta}\widetilde{U}^\mu p_{i\mu}}\frac{dp_i}{p_i^0}=\sum_{i=1}^4\frac{\nu_i}{c}n_iU_i^\mu.
\end{align}
Let $\Lambda$ be a Lorentz transformation which maps $\widetilde{U}^\mu$ into the local rest frame $(c,0,0,0)$ (see \cite{hwang2024relativistic,strain2010global}). Applying the change of variable $\Lambda p_i^\mu=P_i^\mu$, we obtain
\begin{align*}
\int_{\mathbb{R}^3}p_i^\mu e^{-\widetilde{\beta}\widetilde{U}^\mu p_{i\mu}}\frac{dp_i}{p_i^0}&=\Lambda^{-1}\int_{\mathbb{R}^3}P_i^\mu e^{-c\widetilde{\beta} P_{i}^0}\frac{dP_i}{P_i^0}\cr 
&=\Lambda^{-1}\left( \int_{\mathbb{R}^3} e^{-c\widetilde{\beta} P_{i}^0}\,dP_i ,0,0,0\right)\cr 
&=\frac{1}{c}\left(\int_{\mathbb{R}^3}e^{-c\widetilde{\beta} p_i^0}dp_i\right)\widetilde{U}^\mu
\end{align*} 
due to the oddness and the fact that 
\begin{align*}
(m_ic)^2=p^\mu_i p_{i\mu}=P^\mu_i P_{i\mu}, \quad\mbox{and hence}\quad P_i^0=\sqrt{(m_ic)^2+|P_i|^2}.
\end{align*}
This, combined with \eqref{1}, leads to
\begin{align}\label{uu}
\widetilde{U}^\mu=\left(\sum_{i=1}^4\nu_iA_i(\widetilde{\beta})M_i(\widetilde{\beta})\right)^{-1}\left(\sum_{i=1}^4\nu_in_iU_i^\mu\right).
\end{align}
By using the fact that $\widetilde{U}^\mu\widetilde{U}_\mu=c^2$, we obtain
\begin{align}\label{Z}
\sum_{i=1}^4\nu_iA_i(\widetilde{\beta})M_i(\widetilde{\beta})=\frac{1}{c}\left[\left(\sum_{i=1}^4\nu_in_iU_i^\mu\right)\left(\sum_{i=1}^4\nu_in_iU_{i\mu}\right)\right]^{1/2},
\end{align}
so that \eqref{uu} can be rewritten as
$$
	\widetilde{U}^\mu=\frac{1}{Z}\sum_{i=1}^4\nu_in_iU_i^\mu,
	$$
where we used
\begin{align*}
    Z:=\frac{1}{c}\left[\left(\sum_{i=1}^4\nu_in_iU_i^\mu\right)\left(\sum_{i=1}^4\nu_in_iU_{i\mu}\right)\right]^{1/2}.
\end{align*}
$\bullet$ Proof of (2):	The first line of \eqref{eq2-1} yields
\begin{align}\label{first line}
\frac{g_i}{h^3}e^{\widetilde{\beta} \widetilde{\mu}_i}\int_{\mathbb{R}^3}e^{-\widetilde{\beta}\widetilde{U}^\mu p_{i\mu}}\frac{dp_i}{p_i^0}+\frac{g_j}{h^3}e^{\widetilde{\beta} \tilde\mu_j}\int_{\mathbb{R}^3}e^{-\widetilde{\beta}\widetilde{U}^\mu p_{j\mu}}\frac{dp_j}{p_j^0}=\nu_i\int_{\mathbb{R}^3} 	f_i\frac{dp_i}{p_i^0}+\nu_j\int_{\mathbb{R}^3} f_j\frac{dp_j}{p_j^0}.
\end{align}
Using the change of variable $P_i^\mu=\Lambda p_i^\mu$ again,  we obtain
\begin{align*}
\int_{\mathbb{R}^3}e^{-\widetilde{\beta}\widetilde{U}^\mu p_{i\mu}}\frac{dp_i}{p_i^0}=\int_{\mathbb{R}^3}e^{-c\widetilde{\beta} P_i^0}\frac{dP_i}{P_i^0}=\widetilde{M}_i(\widetilde{\beta}),
\end{align*}
where we used the fact that (1) the Lorentz inner product is invariant under $\Lambda$, so  $\Lambda \widetilde{U}^\mu \Lambda p_{i\mu}=\widetilde{U}^\mu p_{i\mu}$, and (2) the volume element $dp_i/p_i^0$ is also invariant under $\Lambda$.	Thus \eqref{first line} can be rewritten as
\begin{align}\label{first 2}
\nu_iA_i(\widetilde{\beta})\widetilde{M}_i(\widetilde{\beta})+\nu_jA_j(\widetilde{\beta})\widetilde{M}_j(\widetilde{\beta})=\nu_i\int_{\mathbb{R}^3} 	f_i\frac{dp_i}{p_i^0}+\nu_j\int_{\mathbb{R}^3} f_j\frac{dp_j}{p_j^0}, \qquad (i,j)=(1,3),(1,4),(2,4).
\end{align}	
Then, it follows directly from \eqref{first 2} that
\begin{align}\label{tempma}
\begin{split}
\widetilde{M}_2A_2&=\int_{\mathbb{R}^3}f_2\frac{dp_2}{p^0_2}+\frac{\nu_1}{\nu_2}\left(\widetilde{M}_1A_1-\int_{\mathbb{R}^3}f_1\frac{dp_1}{p_1^0}\right),\\
\widetilde{M}_3A_3&=\int_{\mathbb{R}^3}f_3\frac{dp_3}{p^0_3}-\frac{\nu_1}{\nu_3}\left(\widetilde{M}_1A_1-\int_{\mathbb{R}^3}f_1\frac{dp_1}{p_1^0}\right),\\
\widetilde{M}_4A_4&=\int_{\mathbb{R}^3}f_4\frac{dp_4}{p^0_4}-\frac{\nu_1}{\nu_4}\left(\widetilde{M}_1A_1-\int_{\mathbb{R}^3}f_1\frac{dp_1}{p_1^0}\right).
\end{split}
\end{align}
\noindent$\bullet$ Proof of (3): Substituting \eqref{tempma} into \eqref{Z}, we obtain
\begin{align*}
&\nu_1\widetilde{M}_1A_1\left(\frac{M_1}{\widetilde{M}_1}+\frac{M_2}{\widetilde{M}_2}-\frac{M_3}{\widetilde{M}_3}-\frac{M_4}{\widetilde{M}_4}\right)-\nu_1\int_{\mathbb{R}^3}f_1\frac{dp_1}{p_1^0}\left(\frac{M_1}{\widetilde{M}_1}+\frac{M_2}{\widetilde{M}_2}-\frac{M_3}{\widetilde{M}_3}-\frac{M_4}{\widetilde{M}_4}\right)+\sum_{i}^4\frac{M_i}{\widetilde{M}_i}\nu_i\int_{\mathbb{R}^3}f_i\frac{dp_i}{p_i^0}\\
&=\frac{1}{c}\left[\left(\sum_{i=1}^4\nu_in_iU_i^\mu\right)\left(\sum_{i=1}^4\nu_in_iU_{i\mu}\right)\right]^{1/2},
\end{align*}
which together with \eqref{notations} gives  
\begin{align*}
	\nu_1\left(\widetilde{M}_1A_1-\int_{\mathbb{R}^3}f_1\frac{dp_1}{p_1^0}\right)\xi=Z-\Sigma.
\end{align*}
\end{proof}
We observed that the auxiliary four-velocity $\widetilde{U}^\mu$ can be explicitly determined by macroscopic quantities of $f_i$, and the chemical potentials $\widetilde{\mu}_i$ ($i=2,3,4$) can be determined uniquely once both $\widetilde{\mu}_1$ and $\widetilde{\beta}$ are given. Of course, for $\widetilde{\mu}_i$ ($i=2,3,4$) 
to be physically meaningful (i.e., real-valued), the search for pairs $(\widetilde{\mu}_1,\widetilde{\beta})$ must be confined to the specific region $\mathcal{D}$:
\begin{align*}
\mathcal{D}:=\bigg\{(\widetilde{\mu}_1,\widetilde{\beta})\in\mathbb{R}\times\mathbb{R}_+\bigg| -\frac{\nu_2}{\nu_1}\int_{\mathbb{R}^3}f_2\frac{dp_2}{p_2^0}<\widetilde{M}_1A_1-\int_{\mathbb{R}^3}f_1\frac{dp_1}{p_1^0}<\min\left\{\frac{\nu_3}{\nu_1}\int_{\mathbb{R}^3}f_3\frac{dp_3}{p_3^0},\frac{\nu_4}{\nu_1}\int_{\mathbb{R}^3}f_4\frac{dp_4}{p_4^0}\right\}\bigg\},
\end{align*} 
which guarantees that all the right hand side of \eqref{ma} are positive so that $\widetilde{\mu}_i$ $(i=2,3,4)$ can be determined as real value via \eqref{ma}. Also, we note that the notational definition of $A_i$ directly gives 
\begin{align*}
    \frac{A_1A_2}{A_3A_4}=\frac{g_{s_1}g_{s_2}}{g_{s_3}g_{s_4}}e^{\widetilde{\mu}_1+\widetilde{\mu}_2-\widetilde{\mu}_3-\widetilde{\mu}_4}.
\end{align*} 
This means that the mass action law \eqref{eq2-2} is equivalent to 
\begin{align*}
    \frac{A_1A_2}{A_3A_4}=\frac{g_{s_1}g_{s_2}}{g_{s_3}g_{s_4}},
\end{align*}
which, together with \eqref{ma}, can be rewritten as a relation for $\widetilde{\mu}_1$ and $\widetilde{\beta}$:
\begin{align}\label{mu_rel}
	\frac{\widetilde{M}_3\widetilde{M}_4}{\widetilde{M}_1\widetilde{M}_2}\frac{\widetilde{M}_1A_1\left(\int_{\mathbb{R}^3}f_2\frac{dp_2}{p^0_2}+\frac{\nu_1}{\nu_2}\left(\widetilde{M}_1A_1-\int_{\mathbb{R}^3}f_1\frac{dp_1}{p_1^0}\right)\right)}{\left(\int_{\mathbb{R}^3}f_3\frac{dp_3}{p^0_3}-\frac{\nu_1}{\nu_3}\left(\widetilde{M}_1A_1-\int_{\mathbb{R}^3}f_1\frac{dp_1}{p_1^0}\right)\right)\left(\int_{\mathbb{R}^3}f_4\frac{dp_4}{p^0_4}-\frac{\nu_1}{\nu_4}\left(\widetilde{M}_1A_1-\int_{\mathbb{R}^3}f_1\frac{dp_1}{p_1^0}\right)\right)}=\frac{g_{s_1}g_{s_2}}{g_{s_3}g_{s_4}}.
\end{align}
The problem of determining the full set of auxiliary parameters is thus reduced to finding a pair $(\widetilde{\mu}_1,\widetilde{\beta})$ within the feasible set $\mathcal{D}$ that simultaneously satisfies \eqref{ma2} and \eqref{mu_rel}. Solving this coupled system is the main challenge. To solve this system, we first establish the following lemma for the root of $Z-\Sigma$ in the right hand side of \eqref{ma2}.
\begin{lemma}$Z-\Sigma$ has the unique root for $\widetilde{\beta}>0.$
\begin{proof} 
    We observe from the definition of $Z$ and $n_iU_i^\mu$ that
	\begin{align*} 
			Z&=\frac{1}{c}\biggl\{\sum_{i,j=1}^4\nu_i\nu_j(n_iU_i^\mu)(n_jU_j^\mu)\biggl\}^{\frac 12}\cr 
			&=\left(\sum_{i,j=1}^4\nu_i \nu_j\iint_{\mathbb{R}^3\times \mathbb{R}^3}p^\mu_i q_{j\mu} f_i(p)f_j(q) \,\frac{dp_i}{p_i^0}\frac{dq_i}{q_i^0}\right)^{\frac 12}\cr 
			&\ge \left(\sum_{i,j=1}^4\nu_i \nu_j\iint_{\mathbb{R}^3\times \mathbb{R}^3}c^2m_im_j f_i(p)f_j(q) \,\frac{dp_i}{p_i^0}\frac{dq_i}{q_i^0}\right)^{\frac 12}\cr 
			&=c\sum_{i=1}^4 m_i\nu_i\int_{ \mathbb{R}^3} f_i \,\frac{dp_i}{p_i^0}, 
 \end{align*}
	where we used the Cauchy–Schwarz inequality:
	$$
	p_i^\mu q_{j\mu}=\sqrt{(cm_i)^2+|p_i|^2}\sqrt{(cm_j)^2+|q_j|^2}-p_i\cdot q_j\ge c^2m_im_j.
	$$
	Thus, we find
	$$
	Z-\Sigma \ge \sum_{i=1}^4\nu_v \left(cm_i-\frac{M_i(\widetilde{\beta})}{\widetilde{M}_i(\widetilde{\beta})} \right) \int_{ \mathbb{R}^3} f_i \,\frac{dp_i}{p_i^0}.
	$$
	On the other hand, it was shown in the proof of  \cite[Proposition 4.2]{hwang2024relativistic} that 
	\begin{align}\label{limits}
		\frac{d}{d\widetilde{\beta}}\left(\frac{M_i}{\widetilde{M}_i} \right)(\widetilde{\beta})<0,\qquad \lim\limits_{\widetilde{\beta}\rightarrow0}\left(\frac{M_i}{\widetilde{M}_i}\right)(\widetilde{\beta})=\infty,\qquad \lim\limits_{\widetilde{\beta}\rightarrow\infty}\left(\frac{M_i}{\widetilde{M}_i}\right)(\widetilde{\beta})=cm_i,
	\end{align}
	which implies that $Z-\Sigma$ is monotone and 
	\begin{align}
		\begin{split}\label{limits1}
			&\lim\limits_{\widetilde{\beta}\rightarrow0}(Z-\Sigma) =-\infty,\qquad
			\lim\limits_{\widetilde{\beta}\rightarrow\infty}(Z-\Sigma)=\frac{1}{c}\biggl\{\sum_{i,j=1}^4\nu_i\nu_j(n_iU_i^\mu)(n_jU_j^\mu)\biggl\}^{\frac 12}-c\sum_{i=1}^4\nu_vm_i \int_{ \mathbb{R}^3} f_i \,\frac{dp_i}{p_i^0} \ge 0.
		\end{split}
	\end{align}
	Therefore, $Z-\Sigma$ has a unique root by the intermediate value theorem.
\end{proof}
\end{lemma}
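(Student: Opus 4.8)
The plan is to show that $Z-\Sigma$, viewed as a function of $\widetilde{\beta}>0$, is strictly monotone and changes sign across the interval $(0,\infty)$, so that the intermediate value theorem produces exactly one root. The first simplification I would record is that $Z$ in \eqref{notations} is built solely from the macroscopic quantities $n_iU_i^\mu=c\int_{\mathbb{R}^3}p_i^\mu f_i\,\frac{dp_i}{p_i^0}$ of the prescribed distributions and from the constants $\nu_i=cm_i/\tau_i$; hence $Z$ is independent of $\widetilde{\beta}$, and the entire $\widetilde{\beta}$-dependence is carried by $\Sigma(\widetilde{\beta})$. The problem therefore reduces to the monotonicity and limiting behavior of the scalar map $\widetilde{\beta}\mapsto\Sigma(\widetilde{\beta})$.

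For monotonicity I would rely on the properties of the ratio $M_i/\widetilde{M}_i$ recorded in \eqref{limits}, in particular $\frac{d}{d\widetilde{\beta}}(M_i/\widetilde{M}_i)<0$. Because each coefficient $\nu_i\int_{\mathbb{R}^3}f_i\,\frac{dp_i}{p_i^0}$ is strictly positive (using $\nu_i>0$ and the hypothesis that $f_i\geq 0$ is not identically zero), $\Sigma$ is a positive linear combination of strictly decreasing functions and is itself strictly decreasing; thus $Z-\Sigma$ is strictly increasing and can vanish at most once. Existence of a root then follows once I exhibit a sign change: from the limits in \eqref{limits}, $M_i/\widetilde{M}_i\to\infty$ as $\widetilde{\beta}\to 0^+$ forces $\Sigma\to\infty$ and hence $Z-\Sigma\to-\infty$, while $M_i/\widetilde{M}_i\to cm_i$ as $\widetilde{\beta}\to\infty$ gives $\Sigma\to c\sum_{i=1}^4 m_i\nu_i\int_{\mathbb{R}^3}f_i\,\frac{dp_i}{p_i^0}$.

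The step I expect to be the main obstacle is verifying that the right endpoint is nonnegative, i.e. that $Z\geq c\sum_{i=1}^4 m_i\nu_i\int_{\mathbb{R}^3}f_i\,\frac{dp_i}{p_i^0}$, since without it the sign change at $\widetilde{\beta}\to\infty$ is not guaranteed. To establish this I would expand the square defining $Z$ as a double sum $\sum_{i,j}\nu_i\nu_j(n_iU_i^\mu)(n_jU_{j\mu})$, rewrite it as a double integral against $f_i(p)f_j(q)$, and apply the relativistic Cauchy--Schwarz inequality $p_i^\mu q_{j\mu}=p_i^0 q_j^0 - p_i\cdot q_j\geq c^2 m_i m_j$ pointwise. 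This lower bound turns the double integral into a perfect square whose square root is precisely $c\sum_{i=1}^4 m_i\nu_i\int_{\mathbb{R}^3}f_i\,\frac{dp_i}{p_i^0}$, giving the desired inequality. Combining strict monotonicity with the endpoint limits $-\infty$ and $\geq 0$, the intermediate value theorem yields the unique root and completes the argument.
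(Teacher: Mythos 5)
Your proposal is correct and follows essentially the same route as the paper's proof: the Cauchy--Schwarz bound $p_i^\mu q_{j\mu}\ge c^2m_im_j$ applied to the double integral representing $Z^2$, the monotonicity and limiting behavior of $M_i/\widetilde{M}_i$ to get strict monotonicity of $Z-\Sigma$ together with the endpoint limits $-\infty$ and $Z-c\sum_i m_i\nu_i\int f_i\,dp_i/p_i^0\ge 0$, and the intermediate value theorem.
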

Let us denote the unique root of $Z-\Sigma$ as $\beta^\#$. We first assume that  $\xi(\beta^\#)\neq0$. Then, we know that for any $\widetilde{\beta}>0$ such that $\xi(\widetilde{\beta})=0$ there is no $\widetilde{\mu}_1$ satisfying \eqref{ma2}. Thus, it is enough to consider $\widetilde{\beta}$ such that $\xi\neq0$. For such $\widetilde{\beta}$, the relation \eqref{ma2} is equivalent to 
\begin{align}\label{mu_1}
\widetilde{M}_1A_1=\int_{\mathbb{R}^3}f_1\frac{dp_1}{p_1^0}+\frac{1}{\nu_1}\frac{Z-\Sigma}{\xi}.
\end{align} Also, this, together with \eqref{ma}, gives that
\begin{align}\label{mu_i}
	\widetilde{M}_iA_i&=\int_{\mathbb{R}^3}f_i\frac{dp_i}{p_i^0}\pm\frac{1}{\nu_i}\frac{Z-\Sigma}{\xi},\quad (+\ \text{for}\ i=2,\ \text{and}\ -\ \text{for}\ i=3,4).
\end{align} 
Putting \eqref{mu_1} and \eqref{mu_i} onto \eqref{mu_rel}, we have
\begin{align}\label{mu_rel2}
	\frac{\widetilde{M}_3\widetilde{M}_4}{\widetilde{M}_1\widetilde{M}_2}\cdot\frac{\left(\int_{\mathbb{R}^3}f_1\frac{dp_1}{p_1^0}+\frac{1}{\nu_1}\frac{Z-\Sigma}{\xi}\right)\left(\int_{\mathbb{R}^3}f_2\frac{dp_2}{p_2^0}+\frac{1}{\nu_2}\frac{Z-\Sigma}{\xi}\right)}{\left(\int_{\mathbb{R}^3}f_3\frac{dp_3}{p_3^0}-\frac{1}{\nu_3}\frac{Z-\Sigma}{\xi}\right)\left(\int_{\mathbb{R}^3}f_4\frac{dp_4}{p_4^0}-\frac{1}{\nu_4}\frac{Z-\Sigma}{\xi}\right)}=\frac{g_{s_1}g_{s_2}}{g_{s_3}g_{s_4}}.
\end{align} 
Since the auxiliary parameter $\widetilde{\mu}_1$ can be determined by \eqref{mu_1} once $\widetilde{\beta}$ is fixed, the entire problem is reduced to proving that there exists a $\widetilde{\beta}$ satisfying \eqref{mu_rel2} on the set $\mathcal{D}_\beta$:
\begin{align*}
\mathcal{D}_\beta:=&\mathcal{D}\cap\left\{(\widetilde{\mu}_1,\widetilde{\beta})\in\mathbb{R}\times\mathbb{R}_+ \bigg | \quad \widetilde{M}_1A_1=\int_{\mathbb{R}^3}f_1\frac{dp_1}{p_1^0}+\frac{1}{\nu_1}\frac{Z-\Sigma}{\xi}\right\}\\
=&\bigg\{\widetilde{\beta}\in\mathbb{R}_+\ \bigg| \ \max\biggl\{-\nu_1\int_{\mathbb{R}^3}f_1\frac{dp_1}{p_1^0},-\nu_2\int_{\mathbb{R}^3}f_2\frac{dp_2}{p_2^0}\bigg\}<\frac{Z-\Sigma}{\xi}(\widetilde{\beta})<\min\biggl\{\nu_3\int_{\mathbb{R}^3}f_3\frac{dp_3}{p_3^0},\nu_4\int_{\mathbb{R}^3}f_4\frac{dp_4}{p_4^0}\bigg\}\bigg\}.
\end{align*}
The following proposition rigorously establishes that such a auxiliary parameter $\widetilde{\beta}$ indeed exists.
\begin{proposition}\label{JF2} Under the same assumption in Proposition \ref{JF1}, there exists the unique solution  $\widetilde{\beta}\in\mathcal{D}_\beta$ to \eqref{mu_rel2}.
\begin{proof} We first give some computational observations. For $\xi$, we have
\begin{align}\label{xiinfty}
	\lim\limits_{\widetilde{\beta}\rightarrow\infty}\xi(\widetilde{\beta})=c(m_1+m_2-m_3-m_4)=c\Delta m.
\end{align}
where we used \eqref{limits}.
Also, we note that $M_i$ and $\widetilde{M}_i$ can be written as
\begin{align*} 
M_i(\widetilde{\beta})&= \frac{4\pi (m_ic)^2}{c\widetilde{\beta}} K_2(m_ic^2\widetilde{\beta})\\
\widetilde{M}_i(\widetilde{\beta})&=2\pi(m_ic)^2 \left\{K_2(m_ic^2\widetilde{\beta})-K_0(m_ic^2\widetilde{\beta}) \right\}\\
&=\frac{2\pi (m_ic)}{c\widetilde{\beta}}K_1(m_ic^2\widetilde{\beta})
\end{align*}
by using the change of variables $\frac{p_i}{m_ic}\rightarrow p_i$, spherical coordinates and integration by parts, where $K_i$ denotes the modified Bessel function of the second kind
\begin{align*}
K_0(\beta)&=\int_0^\infty \frac{1}{\sqrt{1+r^2}}e^{-\beta\sqrt{1+r^2}}\,dr,\cr 
K_1(\beta)&=\int_0^\infty e^{-\beta\sqrt{1+r^2}}\,dr,\cr 
K_2(\beta)&=\int_0^\infty \frac{2r^2+1}{\sqrt{1+r^2}} e^{-\beta\sqrt{1+r^2}}\,dr.
\end{align*}
By using this, we obtain 
\begin{align*}
	\frac{M_i(\widetilde{\beta})}{\widetilde{M}_i(\widetilde{\beta})}&=\frac{2}{c\widetilde{\beta}}\left\{1+\frac{K_0(m_ic^2\widetilde{\beta})}{K_2(m_ic^2\widetilde{\beta})-K_0(m_ic^2\widetilde{\beta})} \right\} \cr 
	&=\frac{2}{c\widetilde{\beta}}+m_ic \frac{K_0(m_ic^2\widetilde{\beta})}{K_1(m_ic^2\widetilde{\beta})},
\end{align*}
which gives that
\begin{equation*} 
	\xi(\widetilde{\beta})=m_1c \frac{K_0(m_1c^2\widetilde{\beta})}{K_1(m_1c^2\widetilde{\beta})}+m_2c \frac{K_0(m_2c^2\widetilde{\beta})}{K_1(m_2c^2\widetilde{\beta})}-m_3c \frac{K_0(m_3c^2\widetilde{\beta})}{K_1(m_3c^2\widetilde{\beta})}-m_4c \frac{K_0(m_4c^2\widetilde{\beta})}{K_1(m_4c^2\widetilde{\beta})}.
\end{equation*}
We note from \cite{cercignani2002relativistic} that for small values of $\beta$, the modified Bessel function of the second kind can be expanded as
\begin{align*}
	K_0(\beta)&=-\ln \beta-\gamma+\ln 2+\frac 14\beta^2\left(-\ln \beta-\gamma+1+\ln 2\right)+\mathcal{O}(\beta^3), \cr 
	K_1(\beta)&=\frac 1\beta+\frac \beta 4\left(2\ln \beta +2\gamma-1-2\ln 2\right)+\mathcal{O}(\beta^3),
\end{align*}
where $\gamma$ is some negative constant. This implies that the positive function $K_0/K_1$ approaches   $0$ as $\beta\to 0$ so that 
\begin{align}\label{xizero} 
\lim\limits_{\widetilde{\beta}\rightarrow0}\xi(\widetilde{\beta})=0.
\end{align}
Combining \eqref{limits1}, \eqref{xiinfty}, and \eqref{xizero}, we have
\begin{equation} \label{limits2}
	\lim\limits_{\widetilde{\beta}\rightarrow0}\frac{Z-\Sigma}{\xi}(\widetilde{\beta})=\pm\infty,\qquad \lim\limits_{\widetilde{\beta}\rightarrow\infty }\frac{Z-\Sigma}{\xi}(\widetilde{\beta})=C_\infty
\end{equation}
where $C_\infty$ is a positive constant defined by
\begin{align*}
	C_\infty =
	\begin{cases*}
		\infty \qquad \text{if}\ \Delta m = 0, \\
		\frac{1}{c^2\Delta m} \left[ \left\{ \sum_{i,j=1}^4 \nu_i \nu_j (n_i U_i^\mu)(n_j U_j^\mu) \right\}^{\frac{1}{2}} 
		- c^2 \sum_{i=1}^4 \nu_i m_i \int_{\mathbb{R}^3} f_i \,\frac{dp_i}{p_i^0} \right] \qquad \text{if}\ \Delta m \neq 0.
	\end{cases*}
\end{align*}
Then, for notational simplicity, we define a function $\Phi:\mathcal{D}_\beta\rightarrow\Phi(\mathcal{D}_\beta)$ by 
$$
\Phi(\widetilde{\beta}):=\frac{\widetilde{M}_3\widetilde{M}_4}{\widetilde{M}_1\widetilde{M}_2}\cdot\frac{\left(\int_{\mathbb{R}^3}f_1\frac{dp_1}{p_1^0}+\frac{1}{\nu_1}\frac{Z-\Sigma}{\xi}\right)\left(\int_{\mathbb{R}^3}f_2\frac{dp_2}{p_2^0}+\frac{1}{\nu_2}\frac{Z-\Sigma}{\xi}\right)}{\left(\int_{\mathbb{R}^3}f_3\frac{dp_3}{p_3^0}-\frac{1}{\nu_3}\frac{Z-\Sigma}{\xi}\right)\left(\int_{\mathbb{R}^3}f_4\frac{dp_4}{p_4^0}-\frac{1}{\nu_4}\frac{Z-\Sigma}{\xi}\right)}
$$
where the domain $\mathcal{D}_\beta$ is given as
\begin{align*}
	\mathcal{D}_\beta=\left\{\widetilde{\beta}\in\mathbb{R}_+\ : \ \max\biggl\{-\nu_1\int_{\mathbb{R}^3}f_1\frac{dp_1}{p_1^0},-\nu_2\int_{\mathbb{R}^3}f_2\frac{dp_2}{p_2^0}\biggl\}<\frac{Z-\Sigma}{\xi}(\widetilde{\beta})<\min\biggl\{\nu_3\int_{\mathbb{R}^3}f_3\frac{dp_3}{p_3^0},\nu_4\int_{\mathbb{R}^3}f_4\frac{dp_4}{p_4^0}\biggl\}\right\}.
\end{align*}
Here, we note that $\beta^\#$ belongs to $\mathcal{D}_\beta$, which implies that $\mathcal{D}_\beta$ is non-empty. Now, to complete the proof, we will show that (i) $\mathcal{D}_\beta$ is an interval in $\mathbb{R}_+$ where the sign of $\xi$ is unchanged,  (ii) $\Phi$ is strictly monotone on $\mathcal{D}_\beta$, and (iii) the range of $\Phi$ on $\mathcal{D}_\beta$ equals $(0,\infty)$. 
\\

\noindent(i) Interval $\mathcal{D}_\beta$: From elementary computations, we get
\begin{align}\label{monotone} \begin{split}
		\frac{d}{d \widetilde{\beta}}\left\{\frac{Z-\Sigma}{\xi}\right\} 
		=&\frac{1}{\xi}\left\{(Z-\Sigma)'-\left(\frac{Z-\Sigma}{\xi}\right)\xi'\right\}\\
		=&\frac{1}{\xi}\left\{-\sum_{i=1}^4\left(\frac{M_i}{\widetilde{M}_i}\right)'\nu_i\int_{\mathbb{R}^3}f_i\frac{dp_i}{p_i^0}-\left(\frac{Z-\Sigma}{\xi}\right)\left(\left(\frac{M_1}{\widetilde{M}_1}\right)'+\left(\frac{M_2}{\widetilde{M}_2}\right)'-\left(\frac{M_3}{\widetilde{M}_3}\right)'-\left(\frac{M_4}{\widetilde{M}_4}\right)'\right)\right\}\\
		=&\frac{1}{\xi}\bigg\{-\left(\frac{M_1}{\widetilde{M}_1}\right)'\left(\nu_1\int_{\mathbb{R}^3}f_1\frac{dp_1}{p_1^0}+\frac{Z-\Sigma}{\xi}\right)-\left(\frac{M_2}{\widetilde{M}_2}\right)'\left(\nu_2\int_{\mathbb{R}^3}f_2\frac{dp_2}{p_2^0}+\frac{Z-\Sigma}{\xi}\right)\bigg\}\\
		&+\frac{1}{\xi}\bigg\{ -\left(\frac{M_3}{\widetilde{M}_3}\right)'\left(\nu_3\int_{\mathbb{R}^3}f_3\frac{dp_3}{p_3^0}-\frac{Z-\Sigma}{\xi}\right)-\left(\frac{M_4}{\widetilde{M}_4}\right)'\left(\nu_4\int_{\mathbb{R}^3}f_4\frac{dp_4}{p_4^0}-\frac{Z-\Sigma}{\xi}\right) \bigg\}\\
		=&:\frac{K}{\xi}.
\end{split}\end{align}
By definition of $\mathcal{D}_\beta$ and \eqref{limits}, we see that $K$ is positive on $\mathcal{D}_\beta$. Let $I=(\beta_l,\beta_r)$ be an arbitrary maximal connected interval contained in $\mathcal{D}_\beta$ where $\beta_l\neq0$ due to \eqref{limits2}. Then, since $\xi$ and $Z-\Sigma$ does not vanish simultaneously, $\frac{Z-\Sigma}{\xi}$ diverges if $\xi$ gose to zero. This, together with the fact that $\xi$ is continuous in $\widetilde{\beta}$, implies that $\xi$ is sign-preserving on $I$. Thus, by \eqref{monotone}, we know that $\frac{Z-\Sigma}{\xi}$ is strictly monotone on $I$. 

To show that $\mathcal{D}_\beta$ is an interval in $\mathbb{R}_+$, we prove that such a maximal interval is unique. For $\beta_r$, there are two cases that may appear. The first case is  $\beta_r<\infty$, where the range of $\frac{Z-\Sigma}{\xi}$ in $I$ is
\begin{align*}
	\left(\max\left\{-\nu_1\int_{\mathbb{R}^3}f_1\frac{dp_1}{p_1^0},-\nu_2\int_{\mathbb{R}^3}f_2\frac{dp_2}{p_2^0}\right\},\min\left\{\nu_3\int_{\mathbb{R}^3}f_3\frac{dp_3}{p_3^0},\nu_4\int_{\mathbb{R}^3}f_4\frac{dp_4}{p_4^0}\right\}\right).
\end{align*}
The second case is $\beta_r=\infty$, i.e., $I=(\beta_l,\infty)$. We note that this case does not happen when $\Delta m=0$ where $\frac{Z-\Sigma}{\xi}$ goes to $\infty$ as $\widetilde{\beta}\rightarrow\infty$. Without loss of generality, we set $\Delta m>0$. Then, $\left(\frac{Z-\Sigma}{\xi}\right)'$ is positive for sufficiently large $\widetilde{\beta}$ due to the positivity of $K$, \eqref{xiinfty}, and \eqref{monotone}. Since $\frac{Z-\Sigma}{\xi}$ is strictly increasing on $I$, its range on $I$ is
\begin{align*}
	\left(\max\left\{-\nu_1\int_{\mathbb{R}^3}f_1\frac{dp_1}{p_1^0},-\nu_2\int_{\mathbb{R}^3}f_2\frac{dp_2}{p_2^0}\right\},C_\infty\right).
\end{align*}
For both cases, by the intermediate value theorem, $I$ has the unique root of the function $Z-\Sigma$, denoted as $\beta^\#$. This means that there couldn't exist two disjoint maximal intervals in $\mathcal{D}_\beta$. Therefore, $\mathcal{D}_\beta$ is an interval in $\mathbb{R}_+$, where the sign of $\xi$ is unchanged.
\newline

\noindent(ii) Strict monotonicity: We rewrite $\Phi(\widetilde{\beta})$ as follows:
\begin{align*}
    \Phi(\widetilde{\beta})=\frac{\widetilde{M}_3\widetilde{M}_4}{\widetilde{M}_1\widetilde{M}_2}\cdot\frac{B_1B_2}{B_3B_4},
\end{align*}
where $B_i$ denotes
$$
B_i(\widetilde{\beta})=\begin{cases}
\int_{\mathbb{R}^3}f_i\frac{dp_i}{p_i^0}+\frac{1}{\nu_i}\frac{Z-\Sigma}{\xi}(\widetilde{\beta}), &\mbox{for } i=1,2,\cr 
\int_{\mathbb{R}^3}f_i\frac{dp_i}{p_i^0}-\frac{1}{\nu_i}\frac{Z-\Sigma}{\xi}(\widetilde{\beta}), &\mbox{for }i=3,4.
\end{cases}
$$ Then, it is straightforward that
\begin{align*}
	\Phi'(\widetilde{\beta})=\Phi(\widetilde{\beta})\times\left(\frac{\widetilde{M}_3'}{\widetilde{M}_3}+\frac{\widetilde{M}_4'}{\widetilde{M}_4}-\frac{\widetilde{M}_1'}{\widetilde{M}_1}-\frac{\widetilde{M}_2'}{\widetilde{M}_2}+\frac{B_1'}{B_1}+\frac{B_2'}{B_2}-\frac{B_3'}{B_3}-\frac{B_4'}{B_4}\right)(\widetilde{\beta}).
\end{align*}
Notice that
\begin{align*}
	\frac{d\widetilde{M}_i}{d\widetilde{\beta}} =-cM_i ,\quad \mbox{and}\quad  \frac{dB_i}{d\widetilde{\beta}} =\begin{cases}
	\frac{1}{\nu_i}\left(\frac{Z-\Sigma}{\xi}\right)^\prime(\widetilde{\beta})&\mbox{for } i=1,2,\cr 
	-\frac{1}{\nu_i}\left(\frac{Z-\Sigma}{\xi}\right)^\prime(\widetilde{\beta}) &\mbox{for } i=3,4.
	\end{cases}
\end{align*}
This, together with \eqref{monotone}, gives
\begin{align*}
	\Phi'(\widetilde{\beta})&=\Phi(\widetilde{\beta})\times\left(c\xi+\left(\frac{Z-\Sigma}{\xi}\right)'\sum_{i=1}^4\frac{1}{\nu_iB_i}\right)(\widetilde{\beta})\cr 
	&=\Phi(\widetilde{\beta})\times\left(c\xi+\frac{K}{\xi}\sum_{i=1}^4\frac{1}{\nu_iB_i}\right)(\widetilde{\beta}),
\end{align*} 
which means $sign(\Phi')=sign(\xi)$ on $\mathcal{D}_\beta$. Since $\xi$ is sign-preserving on $\mathcal{D}_\beta$, we conclude that $\Phi$ is strictly monotone on $\mathcal{D}_\beta$.\\

\noindent(iii) Range of $\Phi$ in $\mathcal{D}_\beta$: We consider two cases separately.
\begin{enumerate}
	\item[Case 1:]  $\mathcal{D}_\beta=(\beta_l,\beta_r)$ with $\beta_l\neq0$ and $\beta_r<\infty$.\\ 
	If $\widetilde{\beta}$ goes to a boundary, then one of $B_i$ goes to zero or infinity, and hence the range of $\Phi$ in $\mathcal{D}_\beta$ is $(0,\infty)$.
	
	\item[Case 2:] $\mathcal{D}_\beta=(\beta_l,\infty)$ with $\beta_l\neq0$.\\
	If $\widetilde{\beta}$ goes to $\beta_l$, then $\Phi(\beta_l)=0$ since one of $B_1$ and $B_2$ goes to zero. Also, we have
	\begin{align*}
		\lim\limits_{\widetilde{\beta}\rightarrow\infty}\Phi(\widetilde{\beta})&=\lim\limits_{\widetilde{\beta}\rightarrow\infty}\frac{\widetilde{M}_3\widetilde{M}_4}{\widetilde{M}_1\widetilde{M}_2}\cdot \lim\limits_{\widetilde{\beta}\rightarrow\infty}\frac{\left(\int_{\mathbb{R}^3}f_1\frac{dp_1}{p_1^0}+\frac{1}{\nu_1}\frac{Z-\Sigma}{\xi}\right)\left(\int_{\mathbb{R}^3}f_2\frac{dp_2}{p_2^0}+\frac{1}{\nu_2}\frac{Z-\Sigma}{\xi}\right)}{\left(\int_{\mathbb{R}^3}f_3\frac{dp_3}{p_3^0}-\frac{1}{\nu_3}\frac{Z-\Sigma}{\xi}\right)\left(\int_{\mathbb{R}^3}f_4\frac{dp_4}{p_4^0}-\frac{1}{\nu_4}\frac{Z-\Sigma}{\xi}\right)}\\
		&=:L_1\times L_2.
	\end{align*}
	Then the limit $L_2$ converge to a positive constant. Instead, $L_1$ diverge as follows:
	\begin{align*}
		L_1&=\lim\limits_{\widetilde{\beta}\rightarrow\infty}\frac{\widetilde{M}_3\widetilde{M}_4}{\widetilde{M}_1\widetilde{M}_2}\\
		&=\lim\limits_{\widetilde{\beta}\rightarrow\infty}\left(\frac{m_1m_2}{m_3m_4}\right)^{1/2}e^{c\widetilde{\beta}(m_1+m_2-m_3-m_4)}\\
		&=\infty.
	\end{align*}
	where we used the asymptotic expansion of the modified Bessel function of the second kind.
\end{enumerate}
For both cases, the range of $\Phi$ is $(0,\infty)$, which completes the proof.
 \end{proof}
\end{proposition}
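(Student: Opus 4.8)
The plan is to read the left-hand side of \eqref{mu_rel2} as a scalar function $\Phi(\widetilde{\beta})$ of the single unknown $\widetilde{\beta}$ (all macroscopic moments of the $f_i$ being frozen) and its right-hand side $g_{s_1}g_{s_2}/(g_{s_3}g_{s_4})$ as a fixed positive constant; solving \eqref{mu_rel2} then amounts to hitting this constant exactly once on $\mathcal{D}_\beta$. I would reduce everything to three claims: (i) $\mathcal{D}_\beta$ is a single open subinterval of $\mathbb{R}_+$ on which $\xi$ keeps a fixed nonzero sign; (ii) $\Phi$ is strictly monotone on $\mathcal{D}_\beta$; and (iii) $\Phi$ maps $\mathcal{D}_\beta$ onto $(0,\infty)$. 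Granting these, $\Phi$ is a continuous strictly monotone bijection onto $(0,\infty)$, so the intermediate value theorem produces a unique $\widetilde{\beta}\in\mathcal{D}_\beta$ with $\Phi(\widetilde{\beta})=g_{s_1}g_{s_2}/(g_{s_3}g_{s_4})$, giving both existence and uniqueness.

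Before attacking the claims I would collect the asymptotics of the two constituents $Z-\Sigma$ and $\xi$. For $Z-\Sigma$ I can quote the preceding lemma: it is monotone, diverges to $-\infty$ as $\widetilde{\beta}\to 0$, has a nonnegative limit at $\infty$, and possesses a unique root $\beta^\#$. For $\xi$ I would pass to the modified Bessel functions $K_0,K_1,K_2$, writing $M_i/\widetilde{M}_i=2/(c\widetilde{\beta})+m_ic\,K_0/K_1$; the $2/(c\widetilde{\beta})$ contributions cancel in $\xi$ because of the $+,+,-,-$ sign pattern, and the small- and large-argument expansions of $K_0/K_1$ give $\xi\to 0$ as $\widetilde{\beta}\to 0$ and $\xi\to c\Delta m$ as $\widetilde{\beta}\to\infty$. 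Combining these with the lemma yields the endpoint behaviour of $(Z-\Sigma)/\xi$ (namely $\pm\infty$ at $0$ and a positive constant $C_\infty$ at $\infty$), which is exactly what controls the boundary of $\mathcal{D}_\beta$.

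Claim (i) is where I expect the real work. The engine is a direct differentiation organized into the form $\frac{d}{d\widetilde{\beta}}\{(Z-\Sigma)/\xi\}=K/\xi$, where $K$ is a sum of terms $-(M_i/\widetilde{M}_i)'$ weighted by precisely the factors $B_i=\int_{\mathbb{R}^3}f_i\,dp_i/p_i^0\pm\nu_i^{-1}(Z-\Sigma)/\xi$ that are positive by the very definition of $\mathcal{D}_\beta$; since $(M_i/\widetilde{M}_i)'<0$ by \eqref{limits}, this forces $K>0$ on $\mathcal{D}_\beta$. The subtle point is that $(Z-\Sigma)/\xi$ is genuinely singular wherever $\xi$ vanishes, so I would first invoke that $\xi$ and $Z-\Sigma$ cannot vanish simultaneously (guaranteed once $\xi(\beta^\#)\neq 0$): any zero of $\xi$ pushes $(Z-\Sigma)/\xi$ out of the bounded band defining $\mathcal{D}_\beta$, so $\xi$ is sign-preserving on each maximal connected piece of $\mathcal{D}_\beta$. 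On such a piece $(Z-\Sigma)/\xi$ is strictly monotone by the sign of $K/\xi$, hence meets the value $0$ exactly once, i.e. contains $\beta^\#$; uniqueness of $\beta^\#$ then rules out two disjoint maximal pieces, so $\mathcal{D}_\beta$ is a single interval with $\xi$ of constant sign.

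For claim (ii) I would differentiate $\Phi=\frac{\widetilde{M}_3\widetilde{M}_4}{\widetilde{M}_1\widetilde{M}_2}\frac{B_1B_2}{B_3B_4}$ logarithmically; using $\widetilde{M}_i'=-cM_i$ and $B_i'=\pm\nu_i^{-1}(K/\xi)$, the Bessel terms collapse to $c\xi$ and the $B$-terms to $(K/\xi)\sum_{i}1/(\nu_iB_i)$, so $\Phi'=\Phi\,(c\xi+(K/\xi)\sum_{i}1/(\nu_iB_i))$. Since $\Phi,B_i,K>0$ on $\mathcal{D}_\beta$, both summands carry the sign of $\xi$, so the sign of $\Phi'$ coincides with that of $\xi$ and is therefore constant by (i). For claim (iii) I would read off the boundary values: on a bounded $\mathcal{D}_\beta=(\beta_l,\beta_r)$, at each endpoint $(Z-\Sigma)/\xi$ reaches a boundary value of the band, so one $B_i\to 0$ or $\infty$ and $\Phi\to 0$ or $\infty$; on an unbounded $\mathcal{D}_\beta=(\beta_l,\infty)$ (which after relabeling requires $\Delta m>0$), one of $B_1,B_2\to 0$ at $\beta_l$ forces $\Phi\to 0$, while as $\widetilde{\beta}\to\infty$ the ratio $B_1B_2/(B_3B_4)$ tends to a positive constant and the prefactor $\widetilde{M}_3\widetilde{M}_4/(\widetilde{M}_1\widetilde{M}_2)$ grows like $e^{c\widetilde{\beta}\Delta m}\to\infty$. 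In both cases the range is $(0,\infty)$. The main obstacle is clearly claim (i): taming $\mathcal{D}_\beta$ around the zeros of $\xi$ and proving it is a single interval, since $(Z-\Sigma)/\xi$ is singular there and only the band inequalities keep it bounded; the monotonicity and surjectivity steps are then routine consequences of the fixed sign of $\xi$ and the vanishing or blow-up of the $B_i$ at the boundary.
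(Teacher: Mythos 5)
Your proposal is correct and follows essentially the same route as the paper's own proof: the same reduction to the three claims (that $\mathcal{D}_\beta$ is a single interval of constant sign of $\xi$, that $\Phi$ is strictly monotone there with $\mathrm{sign}(\Phi')=\mathrm{sign}(\xi)$, and that $\Phi$ ranges over $(0,\infty)$), the same Bessel-function asymptotics for $\xi$, and the same identity $\frac{d}{d\widetilde{\beta}}\{(Z-\Sigma)/\xi\}=K/\xi$ with $K>0$ driving the argument. No substantive differences to report.
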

\begin{remark}[The non-generic case: $\xi(\beta^\#)=0$]\label{remark} In Proposition \ref{JF2}, we established the existence of a unique solution pair $(\widetilde{\mu}_1,\widetilde{\beta})\in\mathcal{D}$ to the system of \eqref{ma2} and \eqref{mu_rel} for the generic case where $\xi(\beta^\#)\neq0$. We now show that an appropriate solution pair $(\widetilde{\mu}_1,\widetilde{\beta})\in\mathcal{D}$ also exists for the non-generic case where $\xi(\beta^\#)=0$. Specifically, for $\widetilde{\beta}=\beta^\#$, the both sides of \eqref{ma2} 
    \begin{align*}
		\nu_1\left(\widetilde{M}_1A_1-\int_{\mathbb{R}^3}f_1\frac{dp_1}{p_1^0}\right)\xi=Z-\Sigma,
	\end{align*}
become zero simultaneously. Thus, \eqref{ma2} is satisfied for any choice of $\widetilde{\mu}_1$. Fixing $\widetilde{\beta}=\beta^\#$ and recalling that $A_1$ is defined as $\frac{g_{s_1}}{h^3}e^{\beta^\#\widetilde{\mu}_1}$, the left hand side of \eqref{mu_rel} can be viewed as an increasing function of $\widetilde{\mu}_1$, ranging from $0$ to $\infty$ on the domain $\mathcal{D}_\mu:=\mathcal{D}\cap\left\{\widetilde{\beta}=\beta^\#\right\}$. Therefore, by the intermediate value theorem, there exists a $\widetilde{\mu}_1$ satisfying \eqref{mu_rel}, in the domain $\mathcal{D}_\mu$. This choice of $\widetilde{\mu}_1$ and $\widetilde{\beta}$ provides a valid pair for the system of \eqref{ma2} and \eqref{mu_rel}. However, uniqueness is not guaranteed, as it is possible that a distinct solution pair of $(\widetilde{\mu}_1,\widetilde{\beta})$ exists. This scenario, however, is non-generic, as it requires the specific condition $\xi(\beta^\#) = 0$. A further investigation of conditions for uniqueness is beyond the scope of this paper.
\end{remark}

Combining the results established in Proposition \ref{JF1}, \ref{JF2}, and Remark \ref{remark}, we summarize the main result of this section in the following theorem:
\begin{theorem}\label{JF3} Under the same assumption as in Proposition \ref{JF1}, there exists a J\"{u}ttner distribution $\mathcal{J}_i$ satisfying the system of equations \eqref{eq2-2} and \eqref{eq2-1}. Here, the auxiliary parameter $\widetilde{U}^\mu$ is always given by the following explicit formula:
    \begin{align*}
	\widetilde{U}^\mu=\frac{1}{Z}\sum_{i=1}^4\nu_in_iU_i^\mu,
	\end{align*}
    and the auxiliary parameters $\widetilde{\mu}_i$ $(i=2,3,4)$ are determined by the relation
    \begin{align*}
	\widetilde{M}_iA_i&=\int_{\mathbb{R}^3}f_i\frac{dp_i}{p_i^0}\pm\frac{\nu_1}{\nu_i}\left(\widetilde{M}_1A_1-\int_{\mathbb{R}^3}f_1\frac{dp_1}{p_1^0}\right),\quad (+\ \text{for}\ i=2,\ \text{and}\ -\ \text{for}\ i=3,4).
\end{align*}
once $\widetilde{\beta}$ and $\widetilde{\mu}_1$ are given. The remaining parameters $\widetilde{\beta}$ and $\widetilde{\mu}_1$ are determined according to the following two cases: 
\begin{enumerate}
\item (Generic Case: $\xi(\beta^\#)\neq0$) A unique  solution pair exists. Specifically, $\widetilde{\beta}$ is determined as the unique solution to \begin{align*}
	\frac{\widetilde{M}_3\widetilde{M}_4}{\widetilde{M}_1\widetilde{M}_2}\cdot\frac{\left(\int_{\mathbb{R}^3}f_1\frac{dp_1}{p_1^0}+\frac{1}{\nu_1}\frac{Z-\Sigma}{\xi}\right)\left(\int_{\mathbb{R}^3}f_2\frac{dp_2}{p_2^0}+\frac{1}{\nu_2}\frac{Z-\Sigma}{\xi}\right)}{\left(\int_{\mathbb{R}^3}f_3\frac{dp_3}{p_3^0}-\frac{1}{\nu_3}\frac{Z-\Sigma}{\xi}\right)\left(\int_{\mathbb{R}^3}f_4\frac{dp_4}{p_4^0}-\frac{1}{\nu_4}\frac{Z-\Sigma}{\xi}\right)}=\frac{g_{s_1}g_{s_2}}{g_{s_3}g_{s_4}},
\end{align*} on the domain $\mathcal{D}_\beta$, and $\widetilde{\mu}_1$ is given by the relations:
\begin{align*}
\widetilde{M}_1A_1&=\int_{\mathbb{R}^3}f_1\frac{dp_1}{p_1^0}+\frac{1}{\nu_1}\frac{Z-\Sigma}{\xi}.
\end{align*} 

\item (Non-generic Case: $\xi(\beta^\#)=0$) At least one solution pair exists but may not be unique. A valid solution pair can be found by setting $\widetilde{\beta}=\beta^\#$ and solving the following equation with respect to $\widetilde{\mu}_1$:
\begin{align*}
	\frac{\widetilde{M}_3\widetilde{M}_4}{\widetilde{M}_1\widetilde{M}_2}\frac{\widetilde{M}_1A_1\left(\int_{\mathbb{R}^3}f_2\frac{dp_2}{p^0_2}+\frac{\nu_1}{\nu_2}\left(\widetilde{M}_1A_1-\int_{\mathbb{R}^3}f_1\frac{dp_1}{p_1^0}\right)\right)}{\left(\int_{\mathbb{R}^3}f_3\frac{dp_3}{p^0_3}-\frac{\nu_1}{\nu_3}\left(\widetilde{M}_1A_1-\int_{\mathbb{R}^3}f_1\frac{dp_1}{p_1^0}\right)\right)\left(\int_{\mathbb{R}^3}f_4\frac{dp_4}{p^0_4}-\frac{\nu_1}{\nu_4}\left(\widetilde{M}_1A_1-\int_{\mathbb{R}^3}f_1\frac{dp_1}{p_1^0}\right)\right)}=\frac{g_{s_1}g_{s_2}}{g_{s_3}g_{s_4}},
\end{align*}
on the domain $\mathcal{D}_\mu$.
\end{enumerate}
\end{theorem}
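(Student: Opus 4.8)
The plan is to assemble the theorem from the three preceding results --- Proposition \ref{JF1}, Proposition \ref{JF2}, and Remark \ref{remark} --- by first reducing the full system of constraints \eqref{eq2-2} and \eqref{eq2-1} to a coupled scalar problem in the single pair $(\widetilde{\mu}_1,\widetilde{\beta})$, and then resolving that problem through the generic/non-generic dichotomy on the sign of $\xi$ at the root $\beta^\#$ of $Z-\Sigma$.

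First I would invoke Proposition \ref{JF1} to decouple the constraints \eqref{eq2-1}. Its part (1) gives the closed-form expression for $\widetilde{U}^\mu$ purely in terms of the macroscopic data of the $f_i$; since $Z$ is defined through the Lorentz norm of $\sum_{i=1}^4\nu_i n_i U_i^\mu$, the normalization $\widetilde{U}^\mu\widetilde{U}_\mu=c^2$ holds automatically, so $\widetilde{U}^\mu$ needs no further analysis. Part (2) expresses $\widetilde{\mu}_2,\widetilde{\mu}_3,\widetilde{\mu}_4$ as explicit functions of $\widetilde{\mu}_1$ and $\widetilde{\beta}$ via \eqref{ma}, reducing the unknowns to the pair $(\widetilde{\mu}_1,\widetilde{\beta})$, and part (3) contributes the scalar relation \eqref{ma2}. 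It then remains to incorporate the mass action law: using the definition of $A_i$ I would rewrite \eqref{eq2-2} as $A_1A_2/(A_3A_4)=g_{s_1}g_{s_2}/(g_{s_3}g_{s_4})$ and, substituting the expressions from \eqref{ma}, recast it as the single equation \eqref{mu_rel}. At this point the whole problem is equivalent to finding $(\widetilde{\mu}_1,\widetilde{\beta})\in\mathcal{D}$ solving the pair \eqref{ma2}--\eqref{mu_rel}, with membership in $\mathcal{D}$ guaranteeing that the reconstructed $\widetilde{\mu}_i$ are real and that $\mathcal{J}_i$, being an exponential, is automatically nonnegative.

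Next I would split on $\xi(\beta^\#)$. In the \emph{generic} case $\xi(\beta^\#)\neq 0$, for every $\widetilde{\beta}$ with $\xi(\widetilde{\beta})\neq 0$ the relation \eqref{ma2} can be solved for $\widetilde{M}_1A_1$, hence for $\widetilde{\mu}_1$, yielding \eqref{mu_1}; inserting \eqref{mu_1} and the induced \eqref{mu_i} into \eqref{mu_rel} collapses the coupled pair into the single transcendental equation \eqref{mu_rel2} for $\widetilde{\beta}$ alone on $\mathcal{D}_\beta$. Proposition \ref{JF2} then supplies a unique $\widetilde{\beta}\in\mathcal{D}_\beta$ solving \eqref{mu_rel2}, after which $\widetilde{\mu}_1$ is recovered from \eqref{mu_1} and $\widetilde{\mu}_2,\widetilde{\mu}_3,\widetilde{\mu}_4$ from \eqref{ma}. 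In the \emph{non-generic} case $\xi(\beta^\#)=0$, I would follow Remark \ref{remark}: at $\widetilde{\beta}=\beta^\#$ both sides of \eqref{ma2} vanish simultaneously, so \eqref{ma2} holds for any $\widetilde{\mu}_1$, and viewing the left side of \eqref{mu_rel} as a function of $\widetilde{\mu}_1$ that increases monotonically and sweeps $(0,\infty)$ on $\mathcal{D}_\mu$, the intermediate value theorem produces an admissible $\widetilde{\mu}_1$.

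The two cases being exhaustive, this yields a J\"{u}ttner distribution $\mathcal{J}_i$ satisfying \eqref{eq2-2} and \eqref{eq2-1} in all situations, establishing the theorem. Since the heavy analytic work --- the strict monotonicity of $\Phi$, the boundary limits computed through the Bessel-function asymptotics, and the verification that the range of $\Phi$ equals exactly $(0,\infty)$ --- has already been carried out in Proposition \ref{JF2}, the remaining effort here is organizational. I expect the only genuinely delicate points to be confirming that the reduction from the coupled pair \eqref{ma2}--\eqref{mu_rel} to the single equation \eqref{mu_rel2} is valid precisely on the set where $\xi\neq 0$, and that the case split on $\xi(\beta^\#)$ is both exhaustive and consistent with remaining inside the admissible region $\mathcal{D}$ throughout.
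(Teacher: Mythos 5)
Your proposal is correct and follows essentially the same route as the paper: the theorem is obtained exactly by assembling Proposition \ref{JF1} (explicit $\widetilde{U}^\mu$ and reduction to the pair $(\widetilde{\mu}_1,\widetilde{\beta})$ via \eqref{ma}--\eqref{ma2}), the reformulation of the mass action law as \eqref{mu_rel}, Proposition \ref{JF2} for the unique $\widetilde{\beta}\in\mathcal{D}_\beta$ in the generic case, and Remark \ref{remark} for the non-generic case. The delicate points you flag (validity of the reduction to \eqref{mu_rel2} only where $\xi\neq0$, and exhaustiveness of the dichotomy on $\xi(\beta^\#)$) are precisely the ones the paper addresses in the surrounding discussion.
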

\vspace{3mm}
The following theorem establishes the H-theorem for our model presented in Theorem \ref{JF3}. Specifically, we prove that the proposed relaxation operator satisfies the crucial inequality \eqref{htheorem}, ensuring consistency with the relativistic Boltzmann equation \eqref{Boltzmann}.
\begin{theorem}[H-theorem]\label{htheorem2}
	The relaxation operator proposed in Theorem \ref{JF3} satisfies the  inequality
	\begin{align*}
		\sum_{i=1}^4\int_{\mathbb{R}^3}\widetilde{C}_i\ln \left(\frac{h^3f_i}{g_{s_i}}\right) dp_i\leq 0
	\end{align*}
where the equality holds if and only if $f_i=\mathcal{J}_i$ for $i=1,\cdots,4$.
	\begin{proof} 
	By the definition, we have
	\begin{align*}
		\sum_{i=1}^4\int_{\mathbb{R}^3}\widetilde{C}_i\ln \left(\frac{h^3\mathcal{J}_i}{g_{s_i}}\right) dp_i&=\sum_{i=1}^4\int_{\mathbb{R}^3}\widetilde{C}_i(\widetilde{\beta}\widetilde{\mu}_i-\widetilde{\beta}\widetilde{U}^\mu p_{i\mu})\ dp_i\\
		&=\widetilde{\beta}\sum_{i=1}^4\widetilde{\mu}_i\int_{\mathbb{R}^3}\widetilde{C}_i\ dp_i+\widetilde{\beta}\widetilde{U}^\mu\sum_{i=1}^4\int_{\mathbb{R}^3}p_{i\mu}\widetilde{C}_i\ dp_i\\
		&=\widetilde{\beta}\sum_{i=1}^4\widetilde{\mu}_i\int_{\mathbb{R}^3}\widetilde{C}_i\ dp_i,
	\end{align*}
	where we used \eqref{eq2-1}. It follows from  	\eqref{eq2-2} and \eqref{eq2-1} that
	\begin{align*}
		\sum_{i=1}^4\widetilde{\mu}_i\int_{\mathbb{R}^3}\widetilde{C}_i\ dp_i=&\widetilde{\mu}_1\int_{\mathbb{R}^3}\widetilde{C}_1\ dp_1+\widetilde{\mu}_2\int_{\mathbb{R}^3}\widetilde{C}_1\ dp_1-\widetilde{\mu}_3\int_{\mathbb{R}^3}\widetilde{C}_1\ dp_1-\widetilde{\mu}_4\int_{\mathbb{R}^3}\widetilde{C}_1\ dp_1   \\
		=&(\widetilde{\mu}_1+\widetilde{\mu}_2-\widetilde{\mu}_3-\widetilde{\mu}_4)\int_{\mathbb{R}^3}\widetilde{C}_1\ dp_1\\
		=&0  ,
	\end{align*}
yielding
\begin{align*}
	\sum_{i=1}^4\int_{\mathbb{R}^3}\widetilde{C}_i\ln \left(\frac{h^3\mathcal{J}_i}{g_{s_i}}\right) dp_i=0.
\end{align*}
Therefore, we get
	\begin{align*}
		\sum_{i=1}^4\int_{\mathbb{R}^3}\widetilde{C}_i\ln \left(\frac{h^3f_i}{g_{s_i}}\right) dp_i&=\sum_{i=1}^4\int_{\mathbb{R}^3}\widetilde{C}_i\left\{\ln \left(\frac{h^3f_i}{g_{s_i}}\right) -\ln \left(\frac{h^3\mathcal{J}_i}{g_{s_i}}\right)\right\}\,dp_i\\
		&=\sum_{i=1}^4\int_{\mathbb{R}^3}\nu_i(\mathcal{J}_i-f_i)\ln \left(\frac{f_i}{\mathcal{J}_i}\right)dp_i\\
		&\leq 0,
	\end{align*}
	where we used the elementary inequality $(x-y)\ln(x/y)\leq0$ in the last line. Then, the equality holds only if $\mathcal{J}_i\equiv f_i$, which completes the proof. 
	\end{proof}
\end{theorem}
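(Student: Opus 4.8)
The plan is to follow the standard BGK strategy of comparing $\ln(h^3 f_i/g_{s_i})$ against the corresponding quantity for the attractor, but to exploit the reactive constraints \eqref{eq2-1} together with the mass action law \eqref{eq2-2} in place of the usual per-species conservation. First I would observe that, directly from the definition \eqref{Juttner}, $\ln(h^3\mathcal{J}_i/g_{s_i})=\widetilde{\beta}\widetilde{\mu}_i-\widetilde{\beta}\widetilde{U}^\mu p_{i\mu}$ is an affine combination of the collision invariants ($1$ and $p_i^\mu$). Consequently $\sum_{i=1}^4\int_{\mathbb{R}^3}\widetilde{C}_i\ln(h^3\mathcal{J}_i/g_{s_i})\,dp_i$ splits into a chemical-potential contribution $\widetilde{\beta}\sum_{i=1}^4\widetilde{\mu}_i\int_{\mathbb{R}^3}\widetilde{C}_i\,dp_i$ and an energy-momentum contribution $-\widetilde{\beta}\widetilde{U}^\mu\sum_{i=1}^4\int_{\mathbb{R}^3}p_{i\mu}\widetilde{C}_i\,dp_i$.

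The energy-momentum contribution vanishes immediately, since the second line of \eqref{eq2-1} asserts exactly $\sum_{i=1}^4\int_{\mathbb{R}^3}p_i^\mu\widetilde{C}_i\,dp_i=0$ for every component $\mu$. For the chemical-potential contribution I would invoke the three pairwise cancellations in the first line of \eqref{eq2-1}. Writing $R:=\int_{\mathbb{R}^3}\widetilde{C}_1\,dp_1$, the pairs $(1,3)$, $(1,4)$, $(2,4)$ force $\int_{\mathbb{R}^3}\widetilde{C}_3\,dp_3=\int_{\mathbb{R}^3}\widetilde{C}_4\,dp_4=-R$ and $\int_{\mathbb{R}^3}\widetilde{C}_2\,dp_2=R$, so that $\sum_{i=1}^4\widetilde{\mu}_i\int_{\mathbb{R}^3}\widetilde{C}_i\,dp_i=(\widetilde{\mu}_1+\widetilde{\mu}_2-\widetilde{\mu}_3-\widetilde{\mu}_4)R$. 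The mass action law \eqref{eq2-2} annihilates this combination, yielding the key identity $\sum_{i=1}^4\int_{\mathbb{R}^3}\widetilde{C}_i\ln(h^3\mathcal{J}_i/g_{s_i})\,dp_i=0$.

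With this identity established, I would subtract it from the target quantity and substitute $\widetilde{C}_i=(\nu_i/p_i^0)(\mathcal{J}_i-f_i)$ to rewrite the entropy production as $\sum_{i=1}^4\int_{\mathbb{R}^3}\frac{\nu_i}{p_i^0}(\mathcal{J}_i-f_i)\ln(f_i/\mathcal{J}_i)\,dp_i$. Since each prefactor $\nu_i/p_i^0$ is strictly positive and the elementary inequality $(y-x)\ln(x/y)\le 0$ holds pointwise (with equality precisely when $x=y$), every integrand is nonpositive, so the total sum is $\le 0$. Equality then forces $(\mathcal{J}_i-f_i)\ln(f_i/\mathcal{J}_i)=0$ almost everywhere, i.e. $f_i=\mathcal{J}_i$ for each $i=1,\ldots,4$, which gives the stated equality condition.

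I expect the only genuinely delicate point to be the bookkeeping in the chemical-potential term: one must verify that the three pairwise relations supplied by \eqref{eq2-1} pin down all four number-production rates up to the single scalar $R$, and that this scalar multiplies precisely the combination $\widetilde{\mu}_1+\widetilde{\mu}_2-\widetilde{\mu}_3-\widetilde{\mu}_4$ that mass action sets to zero. This is the step where the reactive structure, namely pairwise rather than species-wise cancellation, enters in an essential way, and it has no counterpart in the inert or single-species H-theorem; the remaining manipulations are routine.
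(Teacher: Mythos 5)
Your proposal is correct and follows essentially the same route as the paper: vanishing of $\sum_i\int\widetilde{C}_i\ln(h^3\mathcal{J}_i/g_{s_i})\,dp_i$ via the constraints \eqref{eq2-1} plus the mass action law \eqref{eq2-2} (with the same reduction of all four number-production rates to a single scalar $R$), followed by the pointwise inequality $(\mathcal{J}_i-f_i)\ln(f_i/\mathcal{J}_i)\le 0$. Your bookkeeping of the pairwise cancellations and your retention of the positive factor $\nu_i/p_i^0$ in the final integrand are both accurate, so nothing is missing.
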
 

\section{Numerical Simulation}\label{sec-numeric}
In this section, we numerically verify the theoretical properties of the proposed model by checking the energy and momentum conservation, as well as the entropy behavior predicted by the H-theorem. For simplicity, we consider a spatially homogeneous system with one-dimensional momentum, i.e.,  $p_i^\mu=(\sqrt{1+|p_i^1|^2},p_i^1)$, where $p_i^1$ denotes the one-dimensional momentum component. Under this assumption, the governing equation reduces to the following simplified model:
\begin{align}\label{simplified}
	\partial_tf_i=\frac{\nu_i}{p^0}(\mathcal{J}_i-f_i), \qquad i=1,\cdots,4,
\end{align}
where $\mathcal{J}_i$ is the J\"{u}ttner distribution, given in Theorem \ref{JF3}. Note that by \eqref{eq2-1} the time evolution of the momentum distribution functions $f_i$ governed by the simplified model \eqref{simplified} satisfies
\begin{align}
    \label{conservation1}
		\frac{d}{dt}\left(\int_{ \mathbb{R}^1} f_idp_i + \int_{ \mathbb{R}^1} f_jdp_j\right)&=0,\qquad \text{for}\ (i,j)=(1,3),(1,4),(2,4)\\
    \label{conservation2}
		\frac{d}{dt}\left(\sum_{i=1}^4\int_{ \mathbb{R}^1}p_i^0f_idp_i\right)&=0,\quad 
		\frac{d}{dt}\left(\sum_{i=1}^4\int_{ \mathbb{R}^1}p_i^1f_idp_i\right)=0,
\end{align}
and by Theorem \ref{htheorem2} we have
\begin{align}\label{H thm num}
	\frac{d}{dt}\left(\sum_{i=1}^4\int_{ \mathbb{R}^1}f_i\ln{\left(\frac{h^3f_i}{g_{s_i}}\right)}dp_i\right)\leq0.
\end{align}
In the following tests,
we update numerical solutions using the classical fourth-order Runge-Kutta method. To calculate $\widetilde{\beta}$ defined as the unique solution to \eqref{mu_rel2}, we use Newton's method with initial point $\beta^\#$.

\subsection{Case 1. initial data near equilibrium} We first consider the case when initial distribution functions take the form of 
Maxwell-J\"{u}ttner distributions, i.e., for each species we set
\begin{align*}
	f_i^0(p^\mu_i)=\frac{g_{s_i}}{h^3}\exp\left(\beta_i^0\mu_i^0-\beta_i^0U_i^\mu p_{i\nu}\right),\quad i=1,...,4,
\end{align*}
where we used the following macroscopic quantities:
\begin{align*}
	\mu_1=1.8,\ \mu_2=1.3,\ \mu_3=1.0,\ \mu_4=1.0,\\
	U_1=0.5,\ U_2=-0.3,\ U_3=1.0,\ U_4=0.2,\\
	\beta_1=0.8,\ \beta_2=1.1,\ \beta_3=0.9,\ \beta_4=1.2.
\end{align*}
Also, for the collision frequencies $\nu_i$, we choose the following values:
\begin{align*}
	\nu_1=3,\ \nu_2=2,\ \nu_3=1, \nu_4=4,
\end{align*}
and we fix the masses of gas particles:
\begin{align*}
	m_1=2.0,\ m_2=1.0,\ m_3=3,\ m_4=1.
\end{align*}
 \begin{figure}
 	\centering
 	\includegraphics[width=10cm]{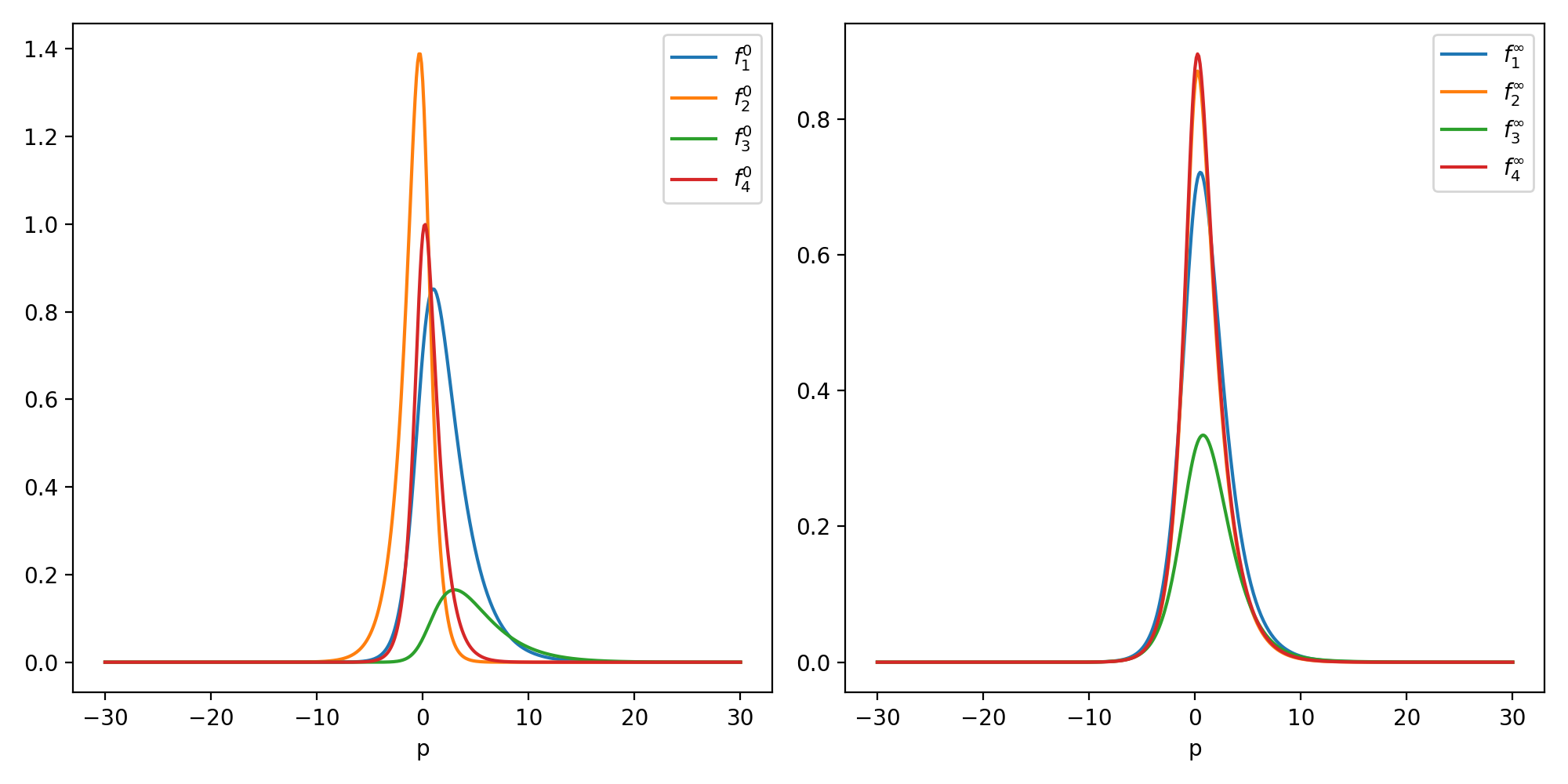}
 	\caption{(Left) Distributions $f_i$ at time $t=0$, (Right) Distributions $f_i$ at time $t=10$.}\label{fig 1}
    	\centering
 \end{figure}
\begin{figure}
\includegraphics[width=7cm]{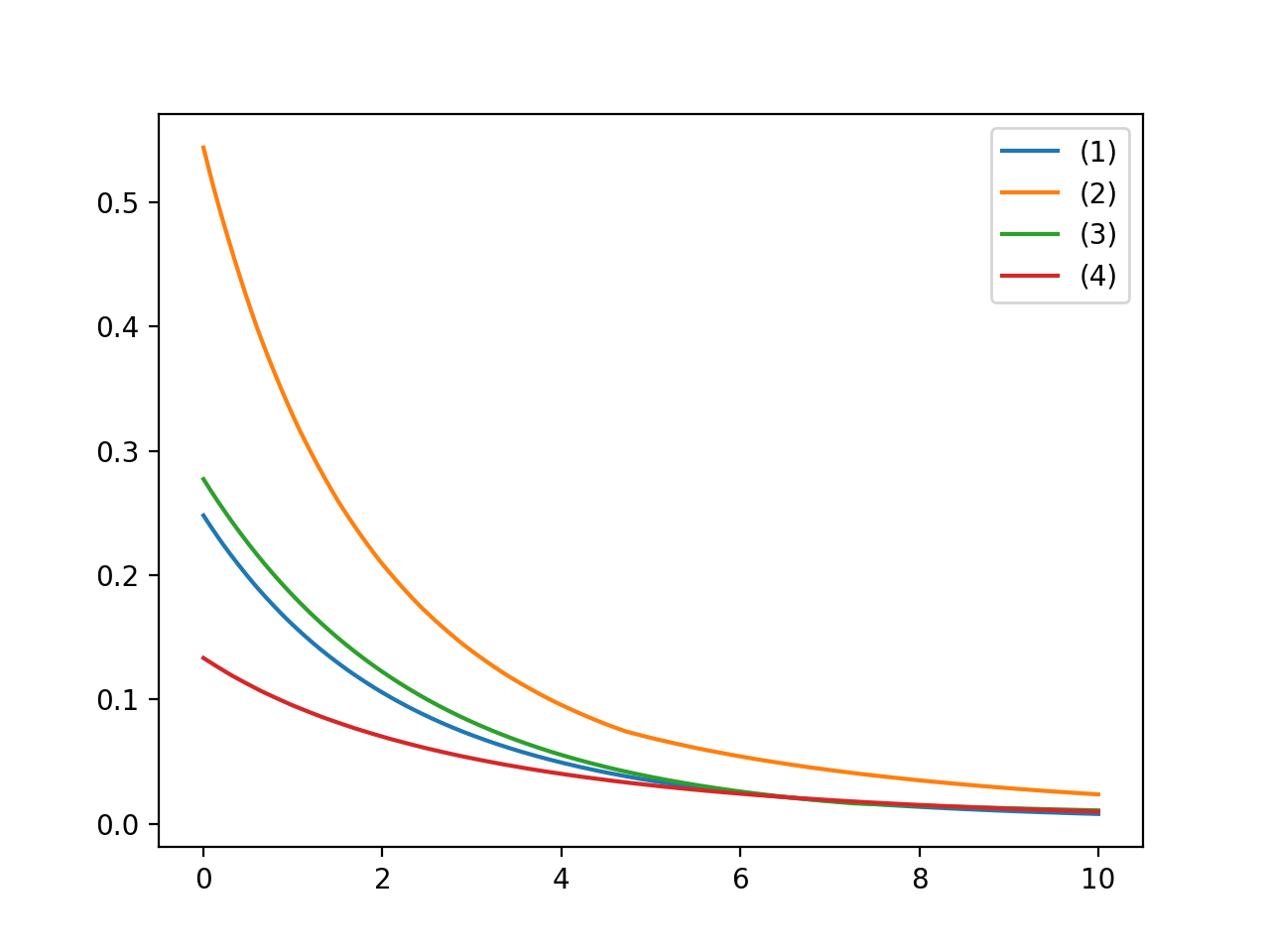}
	\caption{Time evolution of  $\|f_i(t,\cdot)-\mathcal{J}_i(t,\cdot)\|_\infty$, $i=1,\,2,\,3,\,4$.}\label{fig 2}
    \end{figure}
In Figure \ref{fig 1}, we present the configuration of the momentum distributions both at the initial time and the large time.
Indeed, in Figure \ref{fig 2}, 
the time evolution of $\|\cdot\|_\infty$-norm between $f_i$ and J\"{u}ttner-type distribution $\mathcal{J}_i$ implies that each $f_i$ converges to $\mathcal{J}_i$ as time goes.
\begin{figure}
	\centering
\includegraphics[width=9cm]{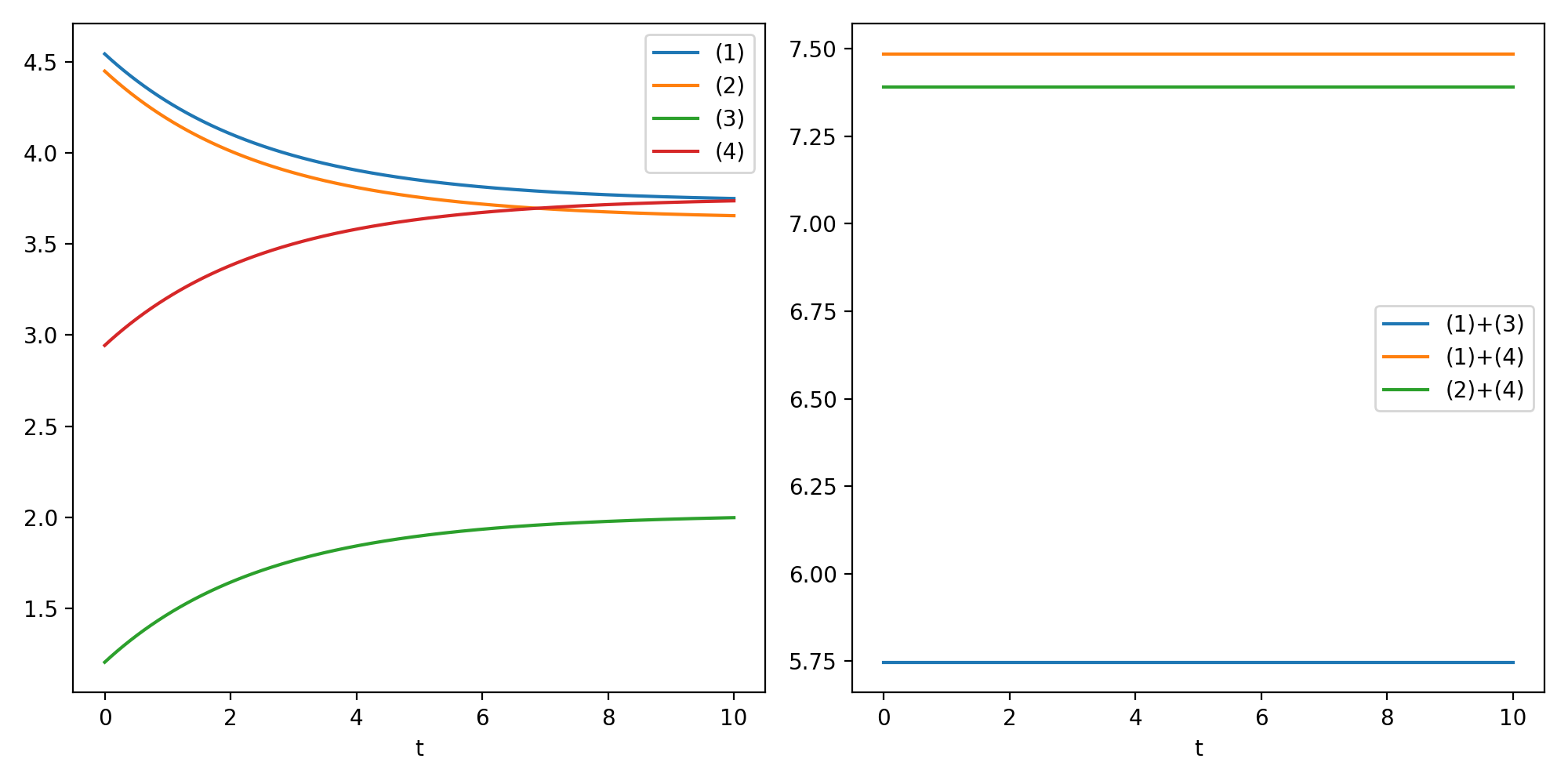}
	\caption{(Left) Behavior of $\int_{\mathbb{R}^1}f_idp_i$ over time $t$, (Right) Behavior of $\int_{\mathbb{R}^1}f_idp_i+\int_{\mathbb{R}^1}f_jdp_j$ over time $t$.}\label{fig 3}
\end{figure}
\begin{figure}
\includegraphics[width=9cm]{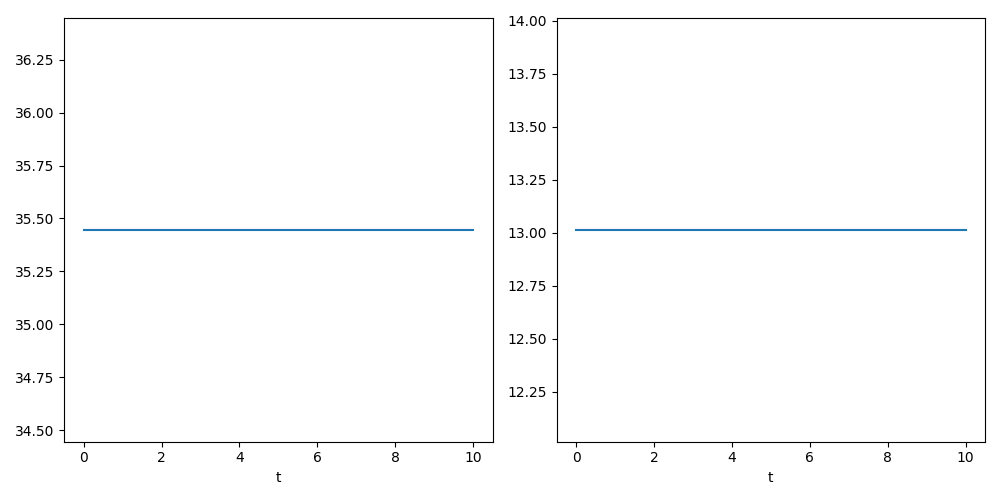}
	\caption{(Left) Behavior of $\sum_{i=1}^4\int_{\mathbb{R}^1}p_i^0f_idp_i$ over time $t$, (Right) Behavior of $\sum_{i=1}^4\int_{\mathbb{R}^1}p_i^1f_idp_i$ over time $t$}\label{fig 4}
\includegraphics[width=7cm]{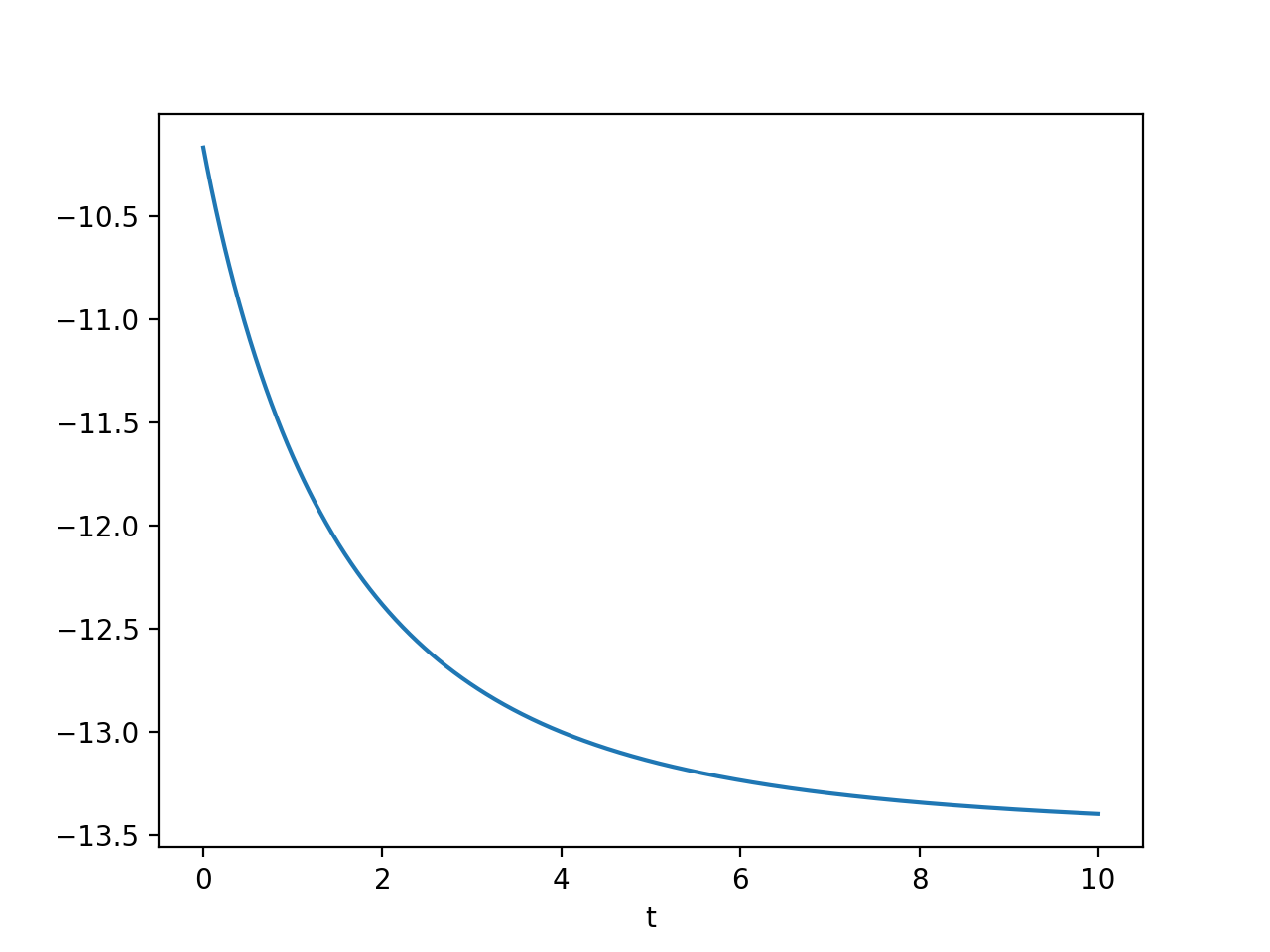}
	\caption{Behavior of $\sum_{i=1}^4\int_{\mathbb{R}^1}f_i\ln{\left(\frac{h^3f_i}{g_{s_i}}\right)}dp_i$ over time $t$.}\label{fig 5}
\end{figure}
In Figure \ref{fig 3}-\ref{fig 4}, the numerical results show that macroscopic quantities are preserved in time as expected in \eqref{conservation1} and \eqref{conservation2}. In Figure \ref{fig 5}, we can observe the expected monotonic behavior of entropy in \eqref{H thm num}.

\subsection{Case 2. initial data far from equilibrium} In this second case, we consider a different set of initial conditions where the distributions have a compactly supported, isosceles triangle shape, similar to the setup used in \cite{martalo2024investigating}. The goal here is to investigate how these non-equilibrium, highly localized initial distributions evolve towards equilibrium. Specifically, each species' initial momentum distribution function, $f_i^0$, is supported on the following intervals: \begin{align*} \text{sppt}(f_1^0)=[-9,-2], \quad \text{sppt}(f_2^0)=[-7.5,3], \quad \text{sppt}(f_3^0)=[-3,1], \quad \text{sppt}(f_4^0)=[-5.5,-5]. \end{align*} The heights of these initial distributions are chosen as: \begin{align*} \max{(f_1^0)}=0.2, \quad \max{(f_2^0)}=0.38, \quad \max{(f_3^0)}=0.25, \quad \max{(f_4^0)}=0.28. \end{align*}
For the test, we fix the collision frequencies $\nu_i$ and the masses $m_i$ as in Case 1 for consistency. 

The left panel in Figure \ref{fig 6} shows the initial triangular distributions of the four species, providing a stark contrast to the Maxwell-J\"{u}ttner distribution used in Case 1. This setup represents a highly non-equilibrium state, where the initial distributions are localized. At time $t=30$, we note that each species distribution function has the form of Maxwell-J\"{u}ttner distribution. The relaxation towards equilibrium is demonstrated by the time evolution of distance between $f_i$ and $\mathcal{J}_i$ in Figure \ref{fig 7}.
\begin{figure} \centering 
\includegraphics[width=10cm]{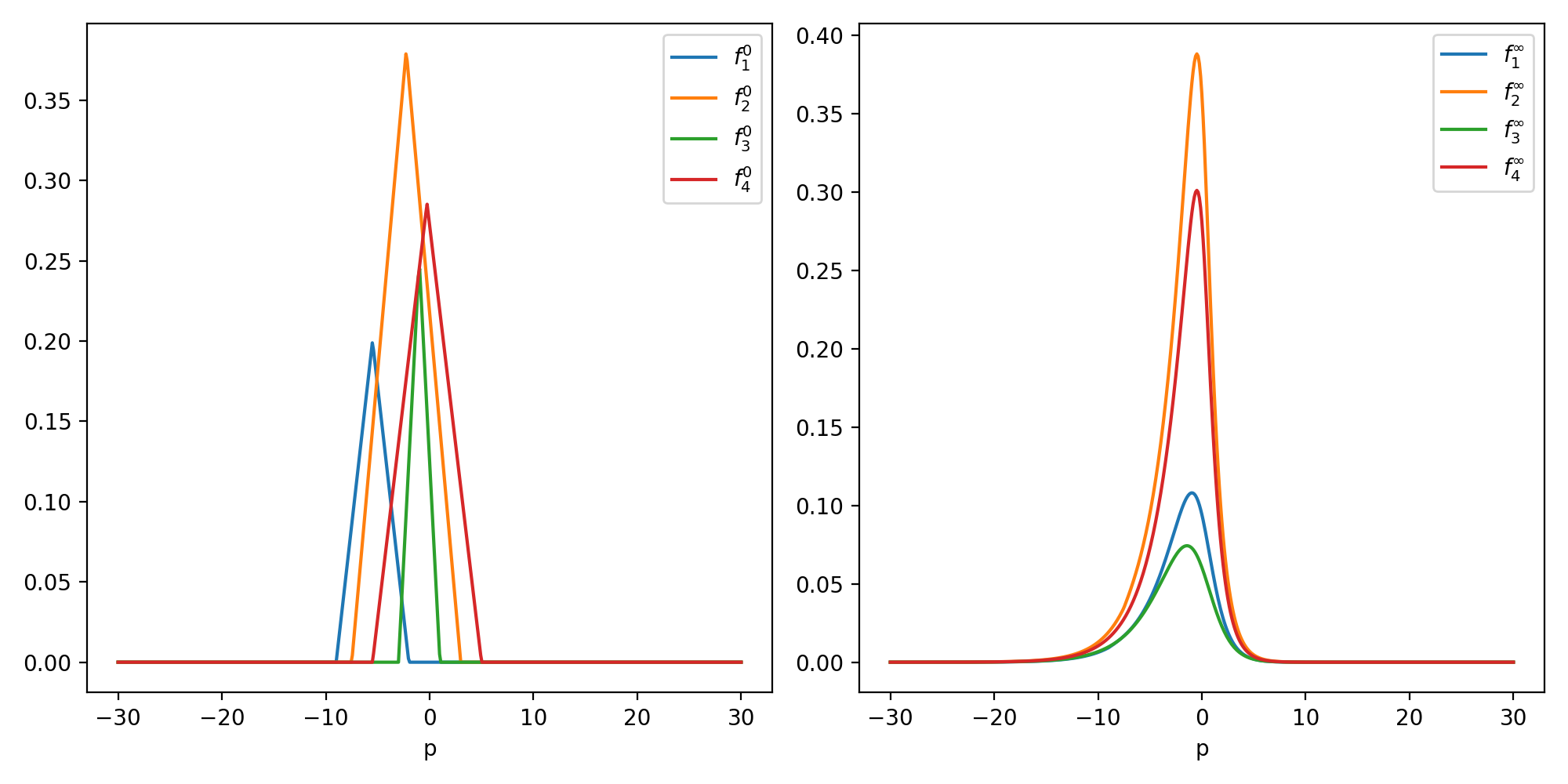} \caption{(Left) Distributions $f_i$ at time $t=0$, (Right) Distributions $f_i$ at time $t=30$.}\label{fig 6} \includegraphics[width=7cm]{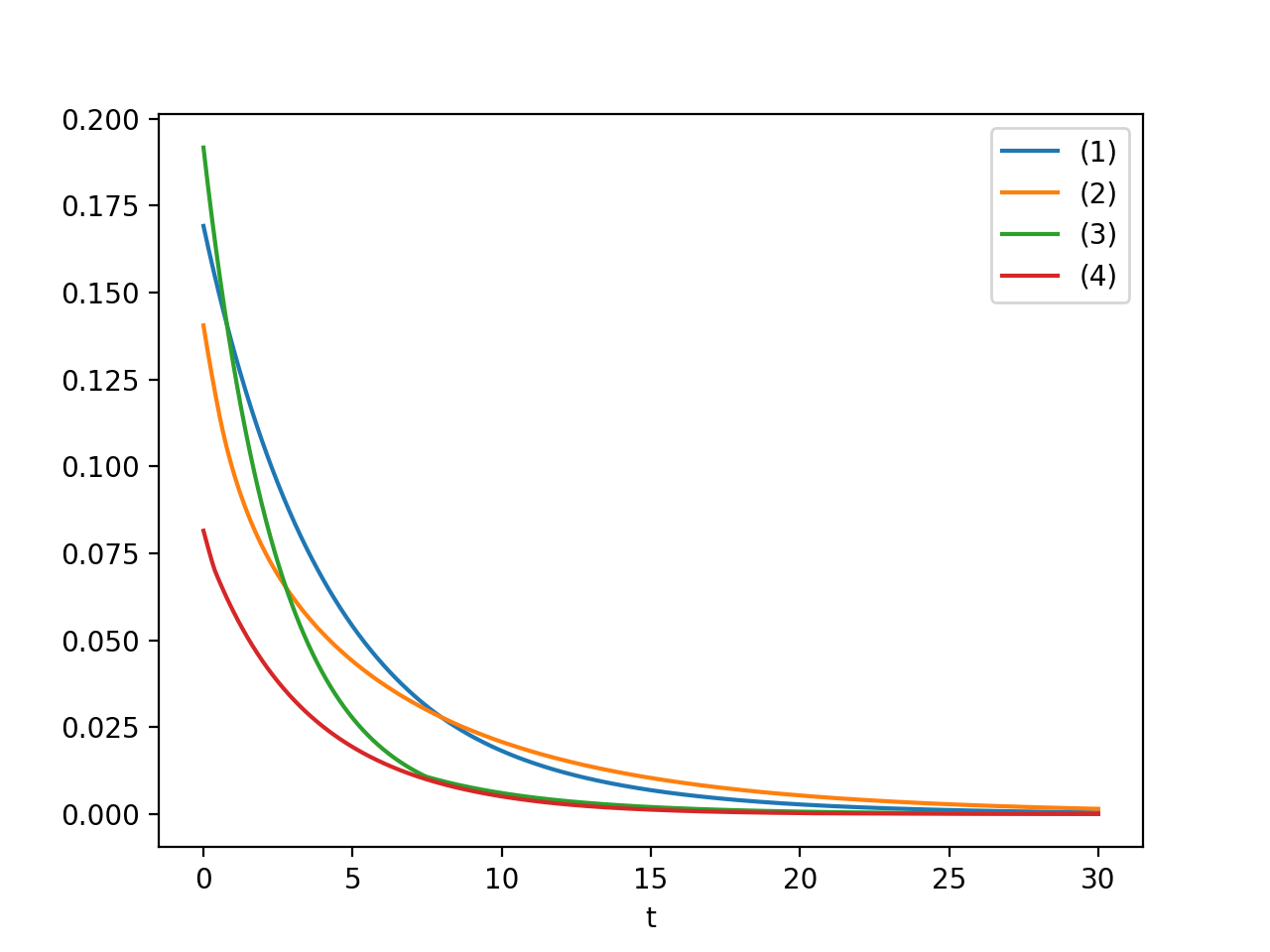} \caption{Behavior of $\|f_i(t,\cdot)-\mathcal{J}_i(t,\cdot)\|_\infty$ over time $t$.}\label{fig 7} 
\end{figure}

\begin{figure} \centering 
\includegraphics[width=10cm]{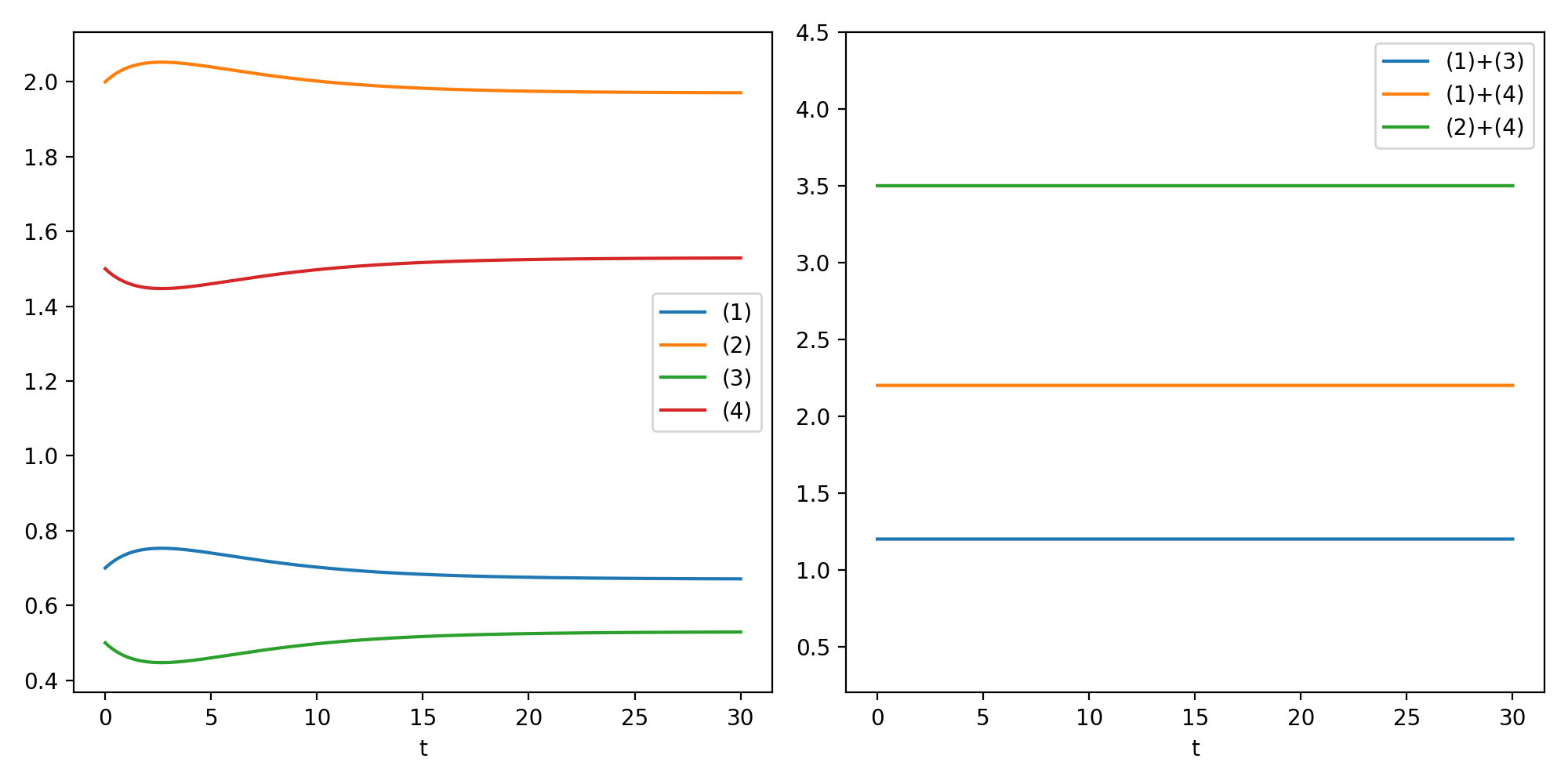} \caption{(Left) Behavior of $\int_{\mathbb{R}^1}f_idp_i$ over time $t$, (Right) Behavior of $\int_{\mathbb{R}^1}f_idp_i+\int_{\mathbb{R}^1}f_jdp_j$ over time $t$.}\label{fig 8} \includegraphics[width=10cm]{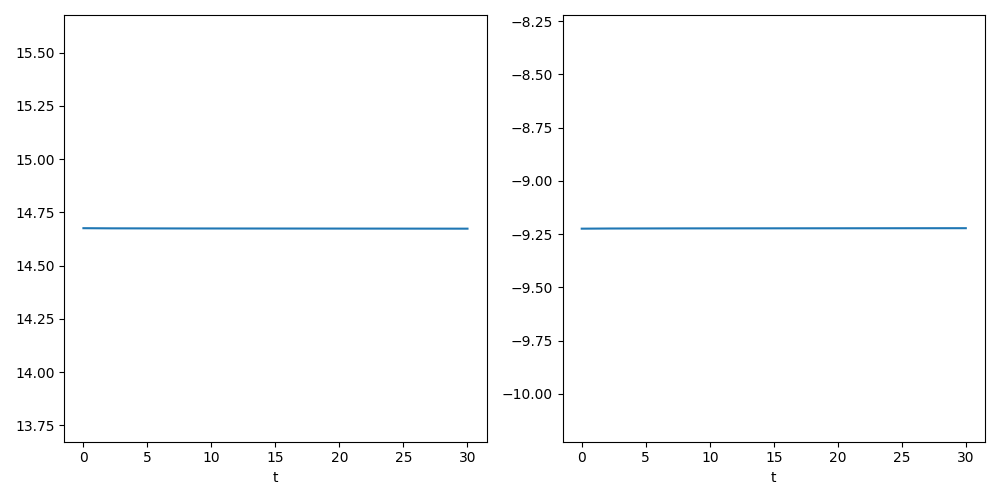} \caption{(Left) Behavior of $\sum_{i=1}^4\int_{\mathbb{R}^1}p_i^0f_idp_i$ over time $t$, (Right) Behavior of $\sum_{i=1}^4\int_{\mathbb{R}^1}p_i^1f_idp_i$ over time $t$}\label{fig 9}
\end{figure}

\begin{figure}\centering \includegraphics[width=7cm]{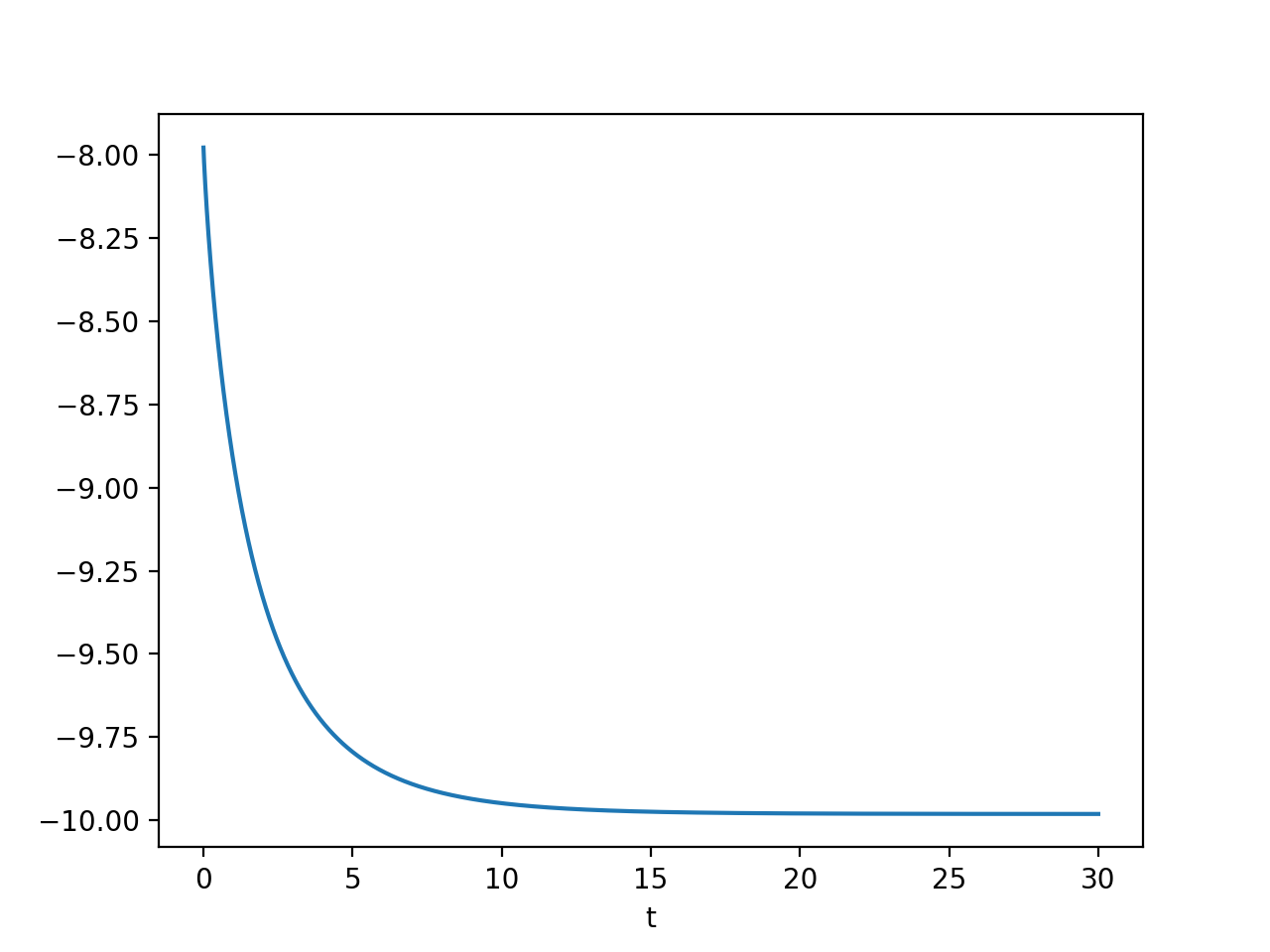} \caption{Behavior of $\sum_{i=1}^4\int_{\mathbb{R}^1}f_i\ln{\left(\frac{h^3f_i}{g_{s_i}}\right)}dp_i$ over time $t$.}\label{fig 10}\end{figure}

In Figures \ref{fig 8}-\ref{fig 10}, we confirm that the conservation of mass, momentum and energy, and the monotonic behavior of entropy are not affected by initial data.

\section*{Acknowledgments}
S.-Y. Cho was supported by Learning \& Academic research institution for Master’s·PhD students, and
Postdocs (LAMP) Program of the National Research Foundation of Korea (NRF) grant funded by the
Ministry of Education (No. RS-2023-00301974). B.-H. Hwang was supported by Basic Science Research Program through the National Research Foundation of Korea(NRF) funded by the Ministry of Education(No. NRF-2019R1A6A1A10073079 and RS-2024-00462755). M.-S. Lee was supported by Basic Science Research Programs through the National Research Foundation of Korea (NRF) funded by the Ministry of Education (RS-2023-00244475 and RS-2024-00462755). S.-B. Yun was supported by the National Research Foundation of Korea(NRF) grant funded by
the Korean goverment(MSIT) (RS-2023-NR076676).

\bibliographystyle{unsrt}
\bibliography{library}

\end{document}